\DeclareSymbolFont{fouriersymbols}{FMS}{futm}{m}{n}
\DeclareSymbolFont{fourierlargesymbols}{FMX}{futm}{m}{n}
\DeclareMathDelimiter{\VERT}{\mathord}{fouriersymbols}{152}{fourierlargesymbols}{147}
\newcommand{\C}{{\mathbb{C}}}
\newcommand{\R}{{\mathbb{R}}}
\newtheorem{theorem}{Theorem}[section]
\newtheorem{definition}[theorem]{Definition}
\newtheorem{lemma}[theorem]{Lemma}
\newtheorem{proposition}[theorem]{Proposition}
\newtheorem{remark}[theorem]{Remark}
\numberwithin{equation}{section}
\DeclareMathOperator{\supp}{supp}
\newcommand{\cH}{{\mathcal H}}
\newcommand{\cE}{{\mathcal E}}
\newcommand{\cO}{{\mathcal O}}
\newcommand{\cU}{{\mathcal U}}
\newcommand{\cF}{{\mathcal F}}
\newcommand{\cP}{{\mathcal P}}
\newcommand{\IR}{{\mathbb R}}
\newcommand{\IC}{{\mathbb C}}
\newcommand{\IN}{{\mathbb N}}
\newcommand{\IU}{{\mathcal U}}
\newcommand{\IQ}{{\mathbb Q}}
\newcommand{\IP}{{\mathbb P}}
\newcommand{\dom}{\mathrm{D}}
\newcommand{\bsb}{{\boldsymbol b}}
\newcommand{\bsf}{{\boldsymbol f}}
\newcommand{\bsy}{{\boldsymbol y}}
\newcommand{\bsc}{{\boldsymbol c}}
\newcommand{\bsrho}{{\bm \rho}}
\newcommand{\bsnu}{{\bm \nu}}
\newcommand{\bsmu}{{\bm \mu}}
\newcommand{\bse}{{\bm e}}
\newcommand{\bsalpha}{{\bm \alpha}}
\begin{document}

\title{
Quantum Circuit Encodings of Polynomial Chaos Expansions}

\author{Junaid Aftab$^{1}$, \; Christoph Schwab$^{2}$, \; Haizhao Yang$^{1,3}$, \; Jakob Zech$^4$\\
	\footnotesize $^{1}$ Department of Mathematics,  University of Maryland, College Park, MD 20742, USA\\
	\footnotesize $^{2}$ Seminar for Applied Mathematics, ETH Z{\"u}rich, R{\"a}mistrasse 101, CH-8092 Z{\"u}rich, Switzerland\\
	\footnotesize $^{3}$ Department of Computer Science, University of Maryland, College Park, MD 20742, USA \\
	\footnotesize $^{4}$ Interdisciplinary Center for Scientific Computing, Heidelberg University, Heidelberg, 69120, Germany}

\date{}

\maketitle

\begin{abstract}
This work investigates the expressive power of quantum circuits in approximating high-dimensional, real-valued functions. 
We focus on countably-parametric holomorphic maps $u:U\to \mathbb{R}$, where the parameter domain is $U=[-1,1]^{\mathbb{N}}$. 
We establish dimension-independent quantum circuit approximation rates 
via the best $n$-term truncations of generalized polynomial chaos (gPC) expansions of these parametric maps, 
demonstrating that these rates depend solely on the summability exponent of the gPC expansion coefficients. 
The key to our findings is based on the fact that so-called ``$(\bsb,\epsilon)$-holomorphic'' functions, 
where $\bsb\in (0,1]^\mathbb N \cap \ell^p(\mathbb N)$ for some $p\in(0,1)$, permit structured and sparse gPC expansions. 
Then, 
$n$-term truncated gPC expansions are known to admit
approximation rates of order $ n^{-1/p + 1/2}$ in the $L^2$ norm 
and 
of order $ n^{-1/p + 1}$ in the $L^\infty$ norm.
We show the existence of parameterized quantum circuit (PQC) encodings of these $n$-term truncated gPC expansions, and bound PQC depth and width via (i) tensorization of univariate PQCs that encode 
Chebyshev-polynomials in $[-1,1]$ and (ii) linear combination of unitaries (LCU) to build PQC emulations of $n$-term truncated gPC 
expansions. The results provide a rigorous mathematical foundation for the use of quantum algorithms in high-dimensional function approximation. 
As countably-parametric holomorphic maps naturally arise in parametric PDE models and uncertainty quantification (UQ), 
our results have implications for quantum-enhanced algorithms for a wide range of maps in applications.
\end{abstract}

\vspace{0.2cm}
\noindent
\textbf{Key words:} Generalized Polynomial Chaos, Sparse Grids, Quantum Computing, Linear Combination of Unitaries.

\vspace{0.2cm}
\noindent
\textbf{Subject Classification:} 
65N30, 68Q12, 81P68

\tableofcontents

\section{Introduction}
\label{sec:Intro}
%
Quantum computing has emerged as a transformative paradigm for addressing computationally demanding problems, 
including high-dimensional partial differential equations (PDEs) 
\cite{childs2021high, liu2021efficient,JIN2022111641}.  
Nevertheless, harnessing these advantages on current noisy intermediate-scale quantum (NISQ) devices necessitates the development of alternative approaches that are compatible with limited qubit counts and do not rely on full quantum error correction. 

PQCs have emerged as a promising framework for function approximation, both 
in scientific computing and machine learning~\cite{benedetti2019pqc, XiaodongDing, Sim_2021}. 
Viewed as quantum analogs of classical neural networks, 
PQCs offer a flexible and expressive approach to approximating high-dimensional functions and, in some settings, 
may outperform classical methods in terms of computational efficiency~\cite{schuld2019qml, havlicek2019supervised}. 
A central challenge, however, lies in rigorously quantifying the approximation capabilities of PQCs 
and establishing the precise circumstances under which they surpass classical 
methodologies such as polynomial approximations, wavelets, 
and deep neural networks~\cite{yang2021nnapproxsmooth, devore2021neural,SHEN2022101}.

Classical approximation theory provides a well-developed foundation for analyzing the expressive power of 
approximation architectures, 
including polynomials, neural networks, and kernel methods~\cite{yang2021nnapproxsmooth, devore2021neural}. 
To enable a meaningful comparison between classical and quantum approaches, 
it is essential to develop a similar theoretical framework for quantum circuits. 
In this work, we analyze the expressive power of quantum circuits for approximating 
high-dimensional, real-valued functions, which correspond to 
countably-parametric holomorphic maps \( u: U \to \mathbb{R} \) with \( U = [-1,1]^{\mathbb{N}} \). 
Focusing on \((\bsb, \epsilon)\)-holomorphic functions, 
where \( \bsb \in (0,1]^{\mathbb{N}} \cap \ell^p(\mathbb{N}) \) for \( p \in (0,1) \), 
we leverage their sparse generalized polynomial chaos (gPC) expansions to construct 
parameterized quantum circuits via tensorization of (block-encodings of) Chebyshev polynomials
within the Linear Combination of Unitaries (LCU) framework of \cite{Childs2012lcu,wang2021fast}.
The LCU PQC architecture has been identified recently as 
promising for near-term implementation \cite{chakraborty2023LCU-NearTerm}. 

\subsection{Contributions}
\label{sec:Contr}
The encoding and approximation of polynomial chaos expansions have recently attracted significant attention, particularly in the context of approximating parametric solution families of partial differential equations (PDEs) with uncertain inputs, as discussed in \cref{sec:ExplRspSrf}. 
A key insight from this literature is that the sparsity (as expressed via the summability index $p\in (0,2)$) 
of the coefficient sequence in a polynomial chaos expansion 
determines the mean-square rate of finite-parametric approximations via \(n\)-term truncations.

Our main contributions are as follows. We review that fact that 
certain very high-dimensional functions—specifically, 
infinite-parametric, holomorphic functions depending on 
countably many parameters — can be approximated well by truncating their polynomial chaos expansions. 
Such functions have been shown recently to arise in the context of response surfaces of elliptic and parabolic PDEs,
which are subject to uncertain input data.
We rigorously establish, for PQCs in the LCU framework,
dimension-independent approximation rates for countably-parametric holomorphic functions 
\( u : U \to \mathbb{R} \) defined on infinite-dimensional parameter domains \( U = [-1,1]^{\mathbb{N}} \). 
Under the assumption that these functions are \((\boldsymbol{b}, \epsilon)\)-holomorphic 
with coefficient sequences 
\(\boldsymbol{b} \in (0,1]^{\mathbb{N}} \cap \ell^p(\mathbb{N})\) for some \( p \in (0,1) \), 
we prove that their best \( n \)-term truncated gPC expansions achieve approximation rates of order \( n^{-1/p + 1/2} \) 
in the \( L^2 \)-norm and order \( n^{-1/p + 1} \) in the \( L^\infty \)-norm, 
which orders are independent of the parameter dimension. 
Importantly, the quality of this approximation does not deteriorate 
as the number of parameters grows. 
The rate at which the approximation improves depends on how quickly the coefficients in these expansions decrease, 
captured by a summability condition on the coefficients. Simply put, if the coefficients are sufficiently sparse, 
then we can approximate the function efficiently 
with only a limited number of terms, and this approximation works well both on average and in the worst case.

Our results and proofs are constructive: 
we build explicit PQC encodings of the truncated Chebyshev- and Taylor-GPC expansions. 
This is achieved by tensorizing univariate PQCs that block-encode Chebyshev polynomials on \([-1,1]\) 
and applying the linear combination of unitaries (LCU) technique. 
In particular, the resulting multivariate PQCs provide, as tensorized LCUs, stability. 
We provide rigorous bounds on the PQC circuit depth and width required to implement these approximations. 
Our findings establish a solid mathematical foundation for leveraging quantum algorithms in approximating high-dimensional parametric maps, with direct implications for parametric PDEs and uncertainty quantification. This work thereby paves the way for quantum-enhanced algorithms targeting complex infinite-dimensional problems in scientific computing.

In summary, this work contributes to closing this gap by analyzing the expressive power of quantum circuits for high-dimensional function approximation, characterizing the trade-offs among circuit depth, entanglement, and approximation accuracy, and laying the groundwork for a rigorous comparison between classical and quantum models. 
We emphasize that the quantum circuit constructions employed here are largely adapted from existing designs; the primary contribution of this work is the accompanying infinite-dimensional approximation theory. 
This demonstrates that, in the finite-dimensional setting, PQC representations inherit the classical exponential convergence behavior of polynomial approximations, leading to highly efficient quantum circuit constructions. The novelty of this work lies not in the construction of quantum circuits per se, but in establishing the first dimension-independent approximation theory for PQCs applied to infinite-dimensional parametric maps.

\subsection{Example: Response Surface of Parametric, Elliptic PDEs}
\label{sec:ExplRspSrf}
Let us present a typical example of the infinite-parametric maps.
In an open and bounded Lipschitz domain $\dom \subseteq \IR^n$, 
and given sufficiently regular functions $f$ and $a:\dom \to \IR$, 
consider the diffusion equation
\begin{equation}\label{eq:Diff}
	f + \nabla_x\cdot (a(x)\nabla_x u) = 0 \quad \mbox{in} \quad \dom\;,\quad u|_{\partial\dom} = 0 \;.
\end{equation}
It is well-known that for coefficient functions 
that are continuous and bounded in $\overline{\dom}$ and satisfy $a(x) \geq a_- > 0$ for $x\in \dom$, 
there exists a unique, weak solution $u\in H^1_0(\dom)$ of \cref{eq:Diff}. 
Assuming that $f$ and $\dom$ are given, fixed,
we shall be interested in expressing the \emph{solution map} $S:a\mapsto u$,
on a suitable set of admissible coefficients $a$.
To describe such a set, 
introduce the set of \emph{affine-parametric diffusion coefficients} 
in \cref{eq:Diff} via the \emph{affine-parametric expansion}
\begin{equation}\label{eq:DiffPar}
	a(x;\bsy) := a_0(x) + \sum_{j\geq 1} y_j \psi_j(x) \;\;, 
\end{equation}
where $|y_j|\leq 1$, 
$a_0, \psi_j \in C(\overline{\dom})$ with $a_0(x) \geq a_{0,-} > 0$ for $x\in D$, 
and where we assume that there exists $\kappa \in (0,1)$ such that
\begin{equation}\label{eq:a>0}
	\| \bsb \|_{\ell^1(\IN)} 
	\leq \kappa a_{0,-} \;, \text{ where }\bsb = (b_j)_{j\geq 1} \; \text{with }
	b_j := \| \psi_j \|_{L^\infty(\dom)}. 
\end{equation}
Typical examples of expansions \cref{eq:DiffPar} are Karhunen-Loeve 
expansions of random field inputs, Fourier- or wavelet-expansions. 
Under Assumption \cref{eq:a>0}, 
for every $\bsy = (y_j)_{j\geq 1} \in \cU := [-1,1]^\IN$ holds
\begin{equation} \label{eq:a(x,y)>0}
	\forall x\in \overline{\dom} \text{ and } \forall \bsy\in \cU: 
	\quad a(x,\bsy) \geq a_{0,-} - \sum_{j\geq 1} b_j \geq (1-\kappa)a_{0,-} > 0
	\;.
\end{equation}
By the Lax-Milgram lemma, 
for given, fixed $f\in H^{-1}(\dom) = (H^1_0(\dom))^*$ 
and for every $\bsy\in \cU$, 
the boundary value problem \cref{eq:Diff} admits a unique solution
$u(\cdot,\bsy)\in H^1_0(\dom)$, 
the Hilbertian Sobolev space of functions with square integrable
weak first order derivative which vanish on $\partial\dom$.
The parametric family of solutions $u:\cU\to H^1_0(\dom)$ is holomorphic with respect to the parameters $\bsy$ \cite{CCS15}.
Let $G\in H^{-1}(\dom)$. 
Then the map $F = G \circ u$ i.e.
\begin{equation}\label{eq:PDErspsrf}
	F:\cU\to \IR: \bsy \mapsto G(u(\cdot,\bsy))
\end{equation}
is a countably-parametric, holomorphic map, which is referred also as
``response surface'' of the PDE \cref{eq:Diff} for the 
parametric input \cref{eq:DiffPar} and the ``quantity of interest'' $G(\cdot)$. 
Response maps of the type \cref{eq:PDErspsrf} with quantified holomorphic parameter dependence 
as we shall assume below occur for a wide range of elliptic and parabolic PDE models with uncertain
input in engineering and in the sciences.
Given a target accuracy $\epsilon>0$,
we are interested in quantum circuits $F_\epsilon: \cU\to \IR$ 
expressing $F$ to accuracy $\epsilon>0$,
i.e. 
finding a PQC $F_\epsilon$ that provides an 
$\epsilon$-accurate emulation of the response 
map $F:\cU\to \IR$, i.e., such that 
$$
\sup_{\bsy\in \cU} | F(\bsy) - F_\epsilon(\bsy) | \leq \epsilon.
$$
In more particular, we will establish a quantitative estimate of the PQC width and depth to achieve the target accuracy, where width refers to the number of qubits the circuit operates on, while depth represents the number of time steps or layers of gates needed to execute the circuit. This is an important question of numerical analysis in practical applications of quantum computing.
%

\subsection{Related Works}
We briefly discuss some related works in both neural network and quantum computing literature.
{}
\subsubsection{Neural Network Literature} 
\label{sec:NNLit}

A parameterized quantum circuit can be viewed as the quantum counterpart of a classical feedforward neural network. In the classical setting, substantial progress has been made toward a rigorous theoretical understanding of deep neural networks (DNNs). DNN approximation theory \cite{devore2021neural} provides a systematic framework for analyzing their expressive power, focusing on establishing approximation error bounds with respect to network width, depth, and the total number of neurons \cite{yang2021nnapproxsmooth, yang2022optimalapproxrelu}. This line of research aims to identify the intrinsic limitations of neural networks in approximating functions from various function spaces, independent of specific training algorithms or data availability. It has been shown that DNNs outperform traditional approximation tools in classical function spaces, such as the space of continuous functions \cite{CiCP-28-1768,yang2022optimalapproxrelu,Shen2021ThreeLayers} and Sobolev spaces \cite{yang2023nearly, yang2023nearlysobolev}. In particular, for $d$-dimensional continuous functions, DNNs with advanced activation functions can achieve approximation accuracy using a number of parameters that scale polynomially in $d$ \cite{Shen2021DeepNetwork,Shen2021ThreeLayers,JMLR:v23:21-1404}, thereby avoiding the curse of dimensionality. Moreover, DNNs with simple activation functions can attain dimension-independent approximation rates when the target function belongs to a reduced function space, such as the Barron function class \cite{barron1993universal,e2019apriori}, certain PDE solution spaces \cite{pmlr-v145-e22a,luo2024two,chen2021representation,hutzenthaler2020proof,grohs2023proof,gonon2023deep}, or sparsity-admitting function spaces as studied in the context of polynomial chaos expansions \cite{OSZ2022,ChSJZDlHd19,DDVKNDTP23}. 

\subsubsection{Quantum Computing Literature}
Recent developments in quantum approximation theory have culminated in the formulation of universal approximation theorems for PQCs, establishing their capacity to approximate arbitrary target functions to a prescribed accuracy \cite{perez2021universal, goto2021qml}. These results mirror the classical universal approximation theorems for neural networks, while incorporating distinct quantum-specific elements such as entanglement structure, quantum circuit width, depth, and the impact of measurement processes on function representation \cite{yu2022power}. In addition, recent research has explored the complexity of quantum circuits required for function approximation, emphasizing the role of entanglement and quantum feature maps in enhancing expressivity. Advances in quantum numerical analysis suggest that certain function classes can be efficiently approximated using quantum-enhanced techniques. In particular, methods such as quantum signal processing (QSP) \cite{low2017qsp,gilyen2019qsvt} and linear combinations of unitaries (LCU) \cite{Childs2012lcu, childs2015lcutaylor} have facilitated the construction of quantum circuits that efficiently approximate high-dimensional function spaces~\cite{yu2023provable}. These approaches leverage quantum coherence to achieve improved convergence rates, revealing potential quantum advantages in numerical analysis and scientific computing. Furthermore, quantum circuits designed using Fourier and Chebyshev polynomial expansions~\cite{aftab2024korobov} have shown the ability to approximate both smooth and non-smooth functions with accuracy comparable to-and in some cases surpassing-that of classical methods. These findings indicate that quantum circuits may offer benefits when approximating functions with periodic or structured characteristics, a property utilized in quantum algorithms for signal processing and spectral estimation~\cite{montanelli2019relu}.

\subsection{Organization}
\label{sec:layout}
The remainder of this paper is structured as follows.
\cref{sec:Prel} reviews the notational conventions adopted throughout this work and summarizes essential background on gPC expansions using tensorized Chebyshev polynomials, as well as foundational concepts in quantum computing.
\cref{sec:QAprgPC} presents our main results, detailing approximation error bounds and complexity estimates. 
\cref{sec:Concl} concludes with a summary of our findings and directions for future research.
\section{Preliminaries}
\label{sec:Prel}
To make the presentation self-contained, this section provides the necessary notation and preliminary results. Standard notation and terminology from numerical analysis are introduced in \cref{sec:notation}. 
Relevant results concerning the polynomial approximation of multivariate holomorphic functions and gPC expansions are summarized in \cref{sec:gPC}. Our main analysis relies on explicit encodings using Chebyshev{} polynomial systems and their associated chaos expansions. Hence, the corresponding key definitions and scaling properties are reviewed, with particular emphasis on tensorized constructions suited for high- and infinite-dimensional settings. Elements of quantum signal processing and the LCU method for quantum circuit design are summarized in \cref{sec:Qc}, following \cite{Childs2012lcu,gilyen2019qsvt}.

\subsection{Notation and Terminology}
\label{sec:notation}
We review some relevant notation and terminology of numerical analysis in this section.
%
\subsubsection{Standard Notation}
\label{sec:StndNot}
Let $\mathbb R, \mathbb C$, and $\mathbb N$ denote 
the set of real numbers, complex numbers, and natural numbers, respectively.
We write $\IN_0$ for the set $\IN\cup \{0\}$.
For $z \in \mathbb C$, $z^*$ denotes the complex conjugate of $z$. 
For a subset $A \subseteq \mathbb R, \mathbb C$, $\mathds 1_A(z)$ denotes the indicator function of the set $A$:
\begin{equation*}
	\mathds 1_A(z) = 
	\begin{cases}
		1, & \text{if } z \in  A, \\
		0, & \text{if } z \notin A.
	\end{cases}
\end{equation*}
With $\mathbb R[x]$ and $\mathbb C[x]$, we 
denote the polynomial ring in one variable over $\R$ and $\C$, respectively. 
For $p(x) \in \mathbb C[x]$, 
$p^*(x)$ denotes the polynomial in $\mathbb C[x]$ 
which is obtained by taking the complex conjugate of each coefficient of $p(x)$. For non-negative functions $f,g : \IN \to [0,\infty)$, 
we use the following standard notation from complexity theory. The usual asymptotic order notation is in force:
\begin{enumerate}
	\item $f(n) = \mathcal O (g(n))$ if and only if 
	there exists a constant $C > 0$ and $N \in \mathbb N$ 
	such that $f(n) \leq C g(n)$ for $n \geq N$.
	\item $f = \Omega(g)$ if and only if there exists a $C > 0$ and $N \in \mathbb N$ 
	such that $f(n) \geq C g(n)$ for $n \geq N$.
	\item $f(n) = \Theta(g(n))$ if and only if $f(n) = \mathcal O (g(n))$ and $f(n)=\Omega(g(n))$.
\end{enumerate}
Here, the symbol $C > 0$ will stand for a generic,
positive constant that is independent of any quantities
determining the asymptotic behaviour of an estimate.
It may change its value even within the same equation.

\subsubsection{Multi-Index Notation}
\label{sec:MultIndNot}
A $\bsalpha\in \IN_0^d$ for some finite $d \geq 1$ is a multi-index of finite length.
For multi-indicies of finite length, $\bsalpha$ and $\boldsymbol{\beta}$, component-wise arithmetic operations are used:
\begin{align*}
	\boldsymbol{\alpha} \cdot \boldsymbol{\beta} &= (\alpha_1 \beta_1, \ldots,  \alpha_d\beta_d), \\
	b \cdot \boldsymbol{\alpha} &= (b\alpha_1, \ldots, b\alpha_d), \\
	c^{\boldsymbol{\alpha}} &= (c^{\alpha_1}, \ldots, c^{\alpha_d}), \quad \text{for } b, c \in \mathbb{R}.
\end{align*} 
We will also consider multi-indicies $\bsnu=(\nu_j)_{j\in\mathbb{N}}\in\IN_0^{\IN}$,
of possibly infinite length. 
The notation $\supp(\bsnu) \subseteq \IN$ stands for the 
\emph{support} of the multi-index, i.e. 
\[
\supp(\bsnu) = \set{j\in\IN \mid \nu_j\neq 0}.
\]
The \emph{support size} of $\bsnu$ is 
$| \bsnu |_0 := \#(\supp(\bsnu) ) \leq \infty$,
its \emph{total order} 
is $|\bsnu|_1:=\sum_{j\in \mathbb{N}} \nu_j$,
and 
its \emph{maximal order} is $|\bsnu|_\infty := \sup\{\nu_j | j\in \IN\}$.
In our discussion of polynomial chaos expansions, we use the (countable) set $\cF$ of ``finitely supported'' multi-indices 
\begin{equation*}
	\cF
	:=
	\set{\bsnu\in\IN_0^{\IN} \mid |\bsnu|_1 < \infty } .
\end{equation*}
A subset $\Lambda\subseteq \cF$ is called \emph{downward closed} (``d.c.'' for short),
if
$\bsnu=(\nu_j)_{j\in\mathbb{N}}\in\Lambda$ 
implies
$\bsmu=(\mu_j)_{j\in\mathbb{N}}\in\Lambda$ 
for all $\bsmu\le\bsnu$.
Here, 
the partial-ordering ``$\le$'' on $\cF$ is defined as
$\mu_j\le\nu_j$, for all $j\in \IN$. A subset $\Lambda\subseteq \cF$ is called \emph{anchored}
if for every $j\in \IN$ holds
$\bse_j \in \Lambda$ implies that $\{\bse_1, \bse_2, ..., \bse_j\} \subseteq \Lambda$. 
Here, $\bse_j\in \ell^\infty(\IR^d)$ denotes the Kronecker sequence.
We consider the sets $\IR^\IN$ and $\IC^\IN$ 
endowed with the product topology.
Any subset such as $[-1,1]^{\mathbb{N}}$ is then understood
to be equipped with the subspace topology. For $\epsilon\in (0,\infty)$ we write 
$$B_\epsilon:=\set{z\in\IC | |z|<\epsilon}.$$
Furthermore
$B_\epsilon^{\mathbb{N}} := \bigtimes_{j\in\mathbb{N}} B_\epsilon \subseteq
\mathbb{C}^{\mathbb{N}}$.
Elements of $\IR^\IN$ and of $\IC^\IN$ 
will be denoted by boldface characters such as
$\bsy=(y_j)_{j\in\mathbb{N}} \in [-1,1]^{\mathbb{N}}$. 
For $\bsnu\in \cF$, standard notations
$\bsy^{\bsnu}:=\prod_{{j\in\mathbb{N}}} y_j^{\nu_j}$ and
$\bsnu!=\prod_{{j\in\mathbb{N}}}\nu_j!$ will be employed (observing
that these formally infinite products contain only a finite number of
nontrivial factors with the conventions $0! := 1$ and $0^0 := 1$).
\subsubsection{Function Space Notation}
\label{sec:FctNot}
For $r \in \mathbb N\cup \{0\}$, we denote by $\tilde{T}_r(x)$ the \emph{classical first-kind Chebyshev-polynomials}:
they are defined recursively such that $\tilde{T}_0(x)=1$, $\tilde{T}_1(x)=x$, 
and
\begin{align*}
	\tilde{T}_r(x) = 2x\tilde{T}_{r-1}(x) - \tilde{T}_{r-2}(x), \quad r \geq 2,
\end{align*}
on $ x\in [-1,1]$.
We write $T_k(x)$ for the \emph{non-classical, ``physical'' Chebyshev-polynomials} 
which are normalized in their weighted $L^2$ space (see below). For finite $d\in \IN$, and 
a bounded domain $\dom \subseteq \R^d$,
let $(\dom, \mathcal{B} ,d \boldsymbol{x})$  
be the usual measure space with the $d$-dimensional Lebesgue measure. 
For $1 \leq p \leq  \infty$, 
the Lebesgue function space, $L^p(\dom;\R) $, 
of measurable maps $f:\dom\to \R$ which are $p$-integrable 
is defined as,
\[
L^p(\dom) = \{f : X \rightarrow \mathbb{R} \,|\, 
f \text{ is Lebesgue measurable and } \|f\|_{L^p(\dom)} < \infty\}.
\]
The corresponding (quasi-)norms are
\begin{align*}
	\|f\|_{L^p(\dom)} &:= \left( \int_{\dom} |f(\boldsymbol{x})|^p \, d \boldsymbol{x} \right)^{1/p}, \\
	\|f\|_{L^\infty(\dom)} &:= \operatorname*{ess\,sup}_{\boldsymbol{x} \in \dom} |f(\boldsymbol{x})|.
\end{align*}
We also considered weighted $L^2$ space. In particular, let $\varrho^{(1)}$ denote the probability Chebyshev-measure
on the univariate domain $\IU = [-1,1]$.
It holds, 
with $dy$ denoting the univariate Lebesgue-measure in $\IU$,
that
$$
d\varrho^{(1)}(y) = \frac{1}{\pi\sqrt{1-y^2}} dy,  \quad y\in \IU\;.
$$
For a finite dimension $d\in \IN$, we denote
$$
\varrho = \varrho^{(d)} = (\varrho^{(1)})^{\otimes d} 
$$
the $d$-fold tensor product of the univariate Chebyshev-measure,
on the parameter domain $\IU = [-1,1]^d$. 
When $d = \infty$, the Kolmogorov extension theorem implies
the existence of a unique (countable) 
tensor product probability measure
\begin{equation}\label{eq:rhoprod}
	\varrho = \varrho^{(\infty)} = \bigotimes_{j\in \IN} \varrho^{(1)}
\end{equation}
on the parameter domain $\IU = [-1,1]^\IN$. For an integrability index $p\in (0,\infty]$
we define the Lebesgue spaces $L^p_\varrho(\IU;\IR)$
in the usual fashion as (equivalence classes of)
strongly $\varrho$-measurable maps $f:\IU\to \IR$
for which $\| f \|_{L^p_\varrho(\IU)} < \infty$
where
\begin{equation}\label{eq:defLp}
	\| f \|_{L^p_\varrho(\IU;\IR)} 
	:=
	\left\{ \begin{array}{lr} \displaystyle \left( \int_{\IU} |f(\bsy)|^p d\varrho(\bsy) \right)^{1/p} 
		& 1\leq p < \infty
		\\
		{\rm ess sup}_{\bsy\in \IU} |f(\bsy)| & p=\infty
	\end{array}
	\right.
\end{equation}
See \cite{evans2022partial, adams} for more details on these various function spaces.
\subsection{Generalized polynomial chaos approximation}
\label{sec:gPC}
We recap several (known) results on rates of $n$-term approximation rates of
parametric-holomorphic, scalar functions.
Only particular cases of more generally valid approximation rate bounds 
in e.g. \cite{DSZ17}
are cited which have relevance to the present derivations; 
in particular, we limit ourselves to so-called 
\emph{Taylor-} \cite{ChSJZDlHd19} and \emph{Chebyshev-gPC expansions},
considered in \cite{ChSJZDlHd19,BCDS17_2452,RS16_1344,DSZ17}, 
for example.
%

%
\subsubsection{Holomorphy}
\label{sec:Hol}
%
Approximation rate bounds of parametric maps 
$f:\IU\to \IC$
will rely on \emph{holomorphic dependence} of $f$ on
its argument $z\in \IU$, as specified in the following
\begin{definition}\label{def:Hol}
	For $d\in \IN$ or $d=\infty$,
	and for an open set $\cO\subseteq \IC^d$,
        resp. $\cO \subset \IC^\IN$ which is open w.r. to the product topology on $\IC^\IN$,
	a map $f:\cO\to \IC$ is holomorphic in $\cO$
	iff it is continuous and if it is separately
	holomorphic with respect to each argument $z_j$, i.e.
	for any fixed $z\in \cO$, and any $j\in \{1,...,d\}$,
	the limit
	$$
	\lim_{\IC\ni h \to 0} \frac{f(x+he_j)-f(z)}{h} 
	$$
	exists in $\IC$. 
        For a map $f:\IU\to \IR$ and an open set $\cO$ with $\IU \subseteq \cO\subseteq \IC^d$,
	we say that \emph{$f$ admits a holomorphic extension to $\cO$}
	if there exists
	a holomorphic $\tilde{f}:\cO\to \IC$ such that $\tilde{f}|_{\IU} = f$ on $\IU$.
\end{definition}

We remark that parametric solution families of linear, elliptic or parabolic PDEs
afford parametric holomorphic solutions and maps $f$ as considered here
are obtained by so-called \emph{quantities of interest} (``QoIs'') given as, eg.,
linear functionals of parametric solutions \cite{ChSJZDlHd19}.
In these references, approximation rate bounds free from the CoD (in the case $d=\infty$)
were obtained from $p$-summabilities of (sequences of) gPC expansion coefficients
via Stechkin's lemma.
Such summabilities were shown in \cite{CDS11,ChSJZDlHd19} to follow from
order-explicit bounds on the gPC coefficients which, in turn, can be obtained via
\emph{quantified holomorphy} on suitable closed Bernstein-Ellipses $\cE_\rho\subseteq \IC$
with foci at $z = \pm 1$ and semiaxis-sum $\rho>1$, 
i.e.
$$
\cE_\rho = \left\{ (z+z^{-1})/2 \mid z\in \IC, \; 1\leq |z| \leq \rho \right\}\;.
$$
Bounds on Chebyshev-gPC coefficients in expansions of $f(z)$ require multivariate
versions of $\cE_\rho$, obtained as cartesian products of coordinate-wise
copies of $\cE_{\rho_j}$ in $z_j\in \IC$:
for any $d>1$, we set
$\bsrho = \prod_{1\leq j \leq d} \rho_j$ (countable product in case that $d=\infty$),
and define the closed poly-ellipse
$$
\cE_\bsrho := \cE_{\rho_1} \times \cE_{\rho_2} \times \hdots  \times \cE_{\rho_d} \; \subseteq \IC^d \;.
$$
The set of all $f:\cE_\bsrho \to \IC$ which are
holomorphic in $\cE_\bsrho$ will be denoted by $\cH(\bsrho)$, i.e.
\begin{equation}\label{eq:Holrho}
	\cH(\bsrho) := \{ f:\cE_\bsrho \to \IC \mid f\;\mbox{holomorphic in}\; \cE_\bsrho, 
	\| f \|_{L^\infty(\cE_\bsrho)} < \infty \} \;.
\end{equation}
For $d=\infty$,
bounds on gPC-coefficient sequences of $f$ require holomorphy on
subsets of $\IC^\IN$ which are unions of all poly-ellipses $\cE_\bsrho$
for which $\bsrho$ is $(\bsb,\epsilon)$-admissible:
for $0<p<1$ and a sequence
$\bsb = (b_j)_{j\geq 1} \in (0,\infty)^\IN\cap \ell^{p}(\IN)$,
we define the cylindric 
\emph{ellipsoidal holomorphy domain}
\begin{equation}\label{eq:HolSet}
	\cE(\bsb,\epsilon) := \bigcup \left\{ \cE_\bsrho \mid \rho_j \geq 1, 
	\sum_{j\geq 1} b_j \left( \frac{\rho_j + 1/\rho_j}{2} - 1 \right) \leq \epsilon \right\}
	\;.
\end{equation}
Similarly, 
given polyradius $\bsrho\in [1,\infty)^\IN$, 
and polydiscs 
$B_\bsrho = B_{\rho_1} \times B_{\rho_2} \times ... \subseteq \IC^\IN$,
with 
$B_r := \{z\in \IC | |z| \leq r \}$,
define \emph{polydisc holomorphy domains}
\begin{equation}\label{eq:HolDisc}
	\mathcal{D}(\bsb,\epsilon) 
	:= 
	\bigcup \left\{ B_\bsrho | \; \rho_j\geq 1,\; \sum_{j\geq 1} b_j(\rho_j-1) \leq \epsilon \right\}
	\;.
\end{equation}
The conditions on the poly-radii $\bsrho$ in  \cref{eq:HolSet}, \cref{eq:HolDisc} are referred to
below and the references cited here as \emph{$(\bsb,\epsilon)$-admissibility}.
In \cref{eq:HolSet}, \cref{eq:HolDisc}
the dependence on $\epsilon>0$ could be absorbed into the sequence $\bsb$; 
however, it will be convenient in applications to keep track 
of $\bsb$ and $\epsilon$ separately.
We also consider the approximation of infinite dimensional holomorphic functions, in the following sense.

\begin{definition}\label{def:bepsHol}
	Let $p \in (0,1)$ and let $\bsb \in \ell^p(\IN)$ be a nonnegative
	sequence. 
	A function $u:\IU \to \R$ is called
	\emph{$(\bsb, p)$-holomorphic} if it can be expressed as
	\begin{equation} \label{eq:Parfct}
		u(\bsy)= U \Big(\sum_{j\in \IN} y_j\psi_j\Big)\qquad\forall\bsy\in\IU,
	\end{equation}
	where for some Banach space $X$, the map $U:X\to\R$ and the sequence
	$(\psi_j)_{j\in\IN}\subseteq X$ satisfy the following: 
        it holds
	$\| \psi_j \|_X \le b_j$
	for all $j\in\IN$, there exists
	an open subset $O\subseteq X_\C$ of the complexified Banach space $X_\C$, 
        such that
	$\set{\sum_{j\in \IN}y_j\psi_j:\bsy\in \IU}\subseteq O$,
	and such that $U:O\to\C$ is holomorphic.
\end{definition}

	
	
%
%
Countably-parametric expressions such as \cref{eq:Parfct} which arise as functionals $G(.)$ 
of a 
parametric family of PDE solutions as in \cref{eq:Diff}, \cref{eq:DiffPar} 
are known to allow \emph{generalized polynomial chaos} (gPC) expansions 
(see, e.g., \cite{BCDS17_2452,Z18_2760} and references there).
For such gPC expansions, 
a range of univariate orthogonal polynomial systems in $[-1,1]$ 
can be considered (e.g. \cite{Z18_2760} and the references there). 
We next recap two of these which are particular amenable to representations by PQCs:
first, 
the so-called Taylor gPC comprising multivariate expansions 
into monomials 
$\{ \bsy^\bsnu: \bsnu\in \cF \}$ (see \cite{CDS11,ChSJZDlHd19}), 
and
second, the so-called Chebyshev-gPC expansions which are based on the 
system $\{ T_j \}_{j\geq 0}$ of Chebyshev-polynomials of the first kind, 
albeit with a non-classical (but crucial for PQC construction) 
normalization in \cref{sec:TschgPC} below.
\subsubsection{Taylor-gPC expansions}
\label{sec:TaygPC}
Taylor-gPC expansions for parametric, holomorphic solution families were used 
e.g. in \cite{CCDS13_479,CCS15} and in \cite{ChSJZDlHd19}, 
where neural network approximations of such expansions were constructed. 
We build PQC surrogates of Taylor-gPC expansions, using the results 
in \cite[App.~B.1]{yu2023provable}. 
Taylor-gPC expansions are formal expressions of the type
\begin{equation}\label{eq:TaylorgPC}
	u(\bsy) = \sum_{\bsnu\in \cF} t_\bsnu \bsy^\bsnu \;, \quad \bsy \in \cU \;,
\end{equation}
where the 
Taylor gPC coefficients $t_\bsnu$ of $u:\bsy \to \IR$ 
are given by
$$
t_\bsnu := \frac{1}{\bsnu!} (\partial^\bsnu_\bsy u)(\bsy) \mid_{\bsy = 0} \in \IR
\;.
$$
References \cite[Prop.~2.3]{ChSJZDlHd19} and \cite{CDS11,CCS13} 
state that for parametric maps $u:U\to \IR$ which 
admit a holomorphic extension to the 
polydisc $\mathcal{D}(\bsrho,\epsilon)$ defined in \cref{eq:HolDisc}
can be represented in the form \cref{eq:TaylorgPC}
with the Taylor-gPC expansion \cref{eq:TaylorgPC} 
converging unconditionally for all $\bsy \in U = [-1,1]^\IN$.
\subsubsection{Chebyshev-gPC expansions}
\label{sec:TschgPC}
We denote by $\{ T_j \}_{j\geq 0}$ the sequence of Chebyshev-polynomials
in $[-1,1]$,
assumed orthonormalized with respect to the probability Chebyshev measure, $\varrho^{(1)}$.
This definition differs from that of the classical Chebyshev-polynomials
$\tilde{T}_j$ which are normalized according to $\tilde{T}_k(1)=1$ for $k=0,1,2,...$. 
There holds $\sup_{|x| \leq 1} |\tilde{T}_k(x)| = 1$ 
and 
$$
T_0 = \pi^{-1/2} \tilde{T}_0\;,\quad T_k = (2/\pi)^{1/2} \tilde{T}_k\;,\; k=1,2,... \;.
$$
In particular also for all $k=0,1,2,...$
$$
\sup_{|x|\leq 1} |T_k(x)| \leq (2/\pi)^{1/2} < 1 
\;.
$$
The $T_j$ form an orthonormal basis (ONB) of the
Hilbert space $L^2_{\varrho^{(1)}}([-1,1];\IR)$. For $d\geq 1$ and for $\bsnu\in \cF$,
we define the tensor-product Chebyshev-polynomials
\begin{equation}\label{eq:Tsch}
	T_\bsnu(\bsy) := \prod_{j\geq 1} T_{\nu_j}(y_j) \;, 
	\quad 
	\bsy = (y_j)_{j\geq 1} \in \IU \;.
\end{equation}
Observe that due to $\bsnu\in \cF$ 
the products in \cref{eq:Tsch} have only a finite number $|\bsnu|_0$ 
of nontrivial terms, and $\forall \bsnu\in \cF$ we have
$$
\sup_{\bsy\in \cU} | T_\bsnu(\bsy) | \leq (2/\pi)^{|\bsnu|_0/2} < 1 \;.
$$
The system $\{ T_\bsnu \}_{\bsnu\in \cF}$
is an ONB of $L^2_\varrho(\IU;\IR)$.
i.e., every $f\in L^2_\varrho(\IU;\IR)$ admits the $L^2_\varrho(\IU;\IR)$-convergent
expansion
\begin{equation}\label{eq:L2Series}
	f = \sum_{\bsnu\in \cF} c_\bsnu T_\bsnu \;,
	\quad 
	c_\bsnu := \int_{\IU} f(\bsy) T_\bsnu(\bsy) d\varrho(\bsy) \;.
\end{equation}
The correspondence
\begin{align}
\label{eq:L2seq}
\mathcal I : L^2_\varrho(\IU) & \to \ell^2(\cF) \\
f & \mapsto \bsc = (c_\bsnu)_{\bsnu\in \cF}
\end{align}
induced by \cref{eq:L2Series} is an isometric isomorphism. 

For numerical approximation of e.g. $u(\bsy)$ in \cref{eq:TaylorgPC},
the formal expansions \cref{eq:TaylorgPC}, \cref{eq:L2Series} 
will be truncated with summation over a finite index set $S\subseteq \cF$. For a finite index set $\emptyset \ne S\subseteq\cF$, 
define the finite-dimensional polynomial space
\begin{equation}\label{eq:defPS}
	\cP_S = {\rm span} \{ T_\bsnu \mid \bsnu\in S \} \subseteq L^2_\varrho(\IU)
	\;.
\end{equation}
A \emph{best $n$-term $L^2_\varrho$ Chebyshev-approximation of $f \in L^2_\varrho(\IU;\IR)$} 
is given by
$$
f_n := {\rm arg min}\left\{ \| f-g \|_{L^2_\varrho(\IU)} : 
g\in \cP_S, S\subseteq \cF, |S|\leq n \right\} \;.
$$
An explicit best $n$-term approximation $f_n$ is given by
$$
f_n = \sum_{\bsnu\in S^*} c_\bsnu T_\bsnu 
$$
where $S^*\subseteq \cF$ is a subset with $|S^*| \leq n$ that contains
multi-indices of $n$ largest coefficients $c_\bsnu$ 
of the Chebyshev-gPC expansion of $f$ in \cref{eq:L2Series}.
It holds
\begin{equation}\label{eq:L2Error}
	\| f - f_n \|_{L^2_\varrho(\IU)}^2 
	=
	\sum_{\cF\backslash S^*} |c_\bsnu|^2
	=
	\| \bsc \|_{\ell^2(\cF\backslash S^*)}^2
	\;.
\end{equation}
Apparently, the polynomial space $\cP_S$ in \cref{eq:defPS} 
depends on the chosen basis to define the span.
When $S$ is a d.c. set, however, $\cP_S$ is independent of the choice of univariate basis.
\begin{lemma}
	\label{lem:cPSBas}
	Assume the index set $\emptyset \ne S\subseteq\cF$ is finite and d.c. Then, 
	for any univariate basis $\{ p_j \}_{j\geq 0}$ of polynomials of precise\footnote{I.e.,
		$p_j(x) = c_jx^j + ...$ with $c_j\ne 0$.} 
	degree $j \geq 0$, there holds
	$$
	\cP_S 
	= {\rm span}\{ \bsy^\bsnu \mid \bsnu\in S \} 
	= {\rm span}\{ P_\bsnu(\bsy) : \bsnu \in S \}.
	$$
Here, for $\bsnu\in \cF$, $P_\bsnu(\bsy)  = p_{\nu_1}(y_1) p_{\nu_2}(y_2)...$
with the dots indicating the product of all nonzero entries $\nu_j$ of $\bsnu$
(noting that $\tilde{T}_0 \equiv 1$ for Chebyshev-polynomials).
\end{lemma}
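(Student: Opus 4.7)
The plan is to show that for any univariate basis $\{p_j\}_{j\geq 0}$ of polynomials of precise degree $j$, the tensorized polynomial $P_\bsnu$ can be expanded as a linear combination of $P_\bsmu$'s (in any other such basis) indexed by multi-indices $\bsmu\le\bsnu$. Downward closedness of $S$ then ensures the spans coincide. Since the monomials $\{x^j\}_{j\geq 0}$ and the \Tsch-polynomials $\{T_j\}_{j\geq 0}$ are both univariate bases of precise degree, both claimed equalities follow from the same general statement.

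First, I would record the univariate change-of-basis fact. Given two univariate bases $\{p_j\}_{j\geq 0}$ and $\{q_j\}_{j\geq 0}$ with $p_j,q_j$ of precise degree $j$, one can expand
\begin{equation*}
q_j(y) \;=\; \sum_{k=0}^{j} a_{j,k}\, p_k(y),\qquad a_{j,j}\neq 0,
\end{equation*}
by triangular invertibility of the change-of-basis matrix. Tensorizing, for any $\bsnu\in\cF$,
\begin{equation*}
Q_\bsnu(\bsy) \;=\; \prod_{j\in\supp(\bsnu)} q_{\nu_j}(y_j)
\;=\; \prod_{j\in\supp(\bsnu)}\sum_{k_j=0}^{\nu_j} a_{\nu_j,k_j}\, p_{k_j}(y_j)
\;=\;\sum_{\bsmu\le\bsnu} c_{\bsnu,\bsmu}\, P_\bsmu(\bsy),
\end{equation*}
where the coefficients $c_{\bsnu,\bsmu}=\prod_{j\in\supp(\bsnu)}a_{\nu_j,\mu_j}$ are obtained by distributing the product, and only multi-indices $\bsmu$ with $\mu_j\le\nu_j$ for every $j$ appear. (Note that outside $\supp(\bsnu)$ we have $\nu_j=0$, forcing $\mu_j=0$ as well, so the products are genuinely finite.)

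Now I would invoke downward closedness: if $\bsnu\in S$ and $\bsmu\le\bsnu$, then $\bsmu\in S$. Hence each $Q_\bsnu$ with $\bsnu\in S$ lies in $\mathrm{span}\{P_\bsmu : \bsmu\in S\}$, giving
\begin{equation*}
\mathrm{span}\{Q_\bsnu : \bsnu\in S\} \;\subseteq\; \mathrm{span}\{P_\bsnu : \bsnu\in S\}.
\end{equation*}
Swapping the roles of $\{p_j\}$ and $\{q_j\}$ yields the reverse inclusion, so the two spans coincide. Applying this with $\{q_j\}=\{T_j\}$ and once with $\{p_j\}=\{x^j\}$ and once with $\{p_j\}$ arbitrary of precise degree gives the three-way equality in the statement.

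The only mildly delicate point is the bookkeeping of the tensorization step — specifically verifying that expanding the product only produces multi-indices $\bsmu\le\bsnu$ (rather than some larger set) and that downward closedness is precisely the condition needed to keep all these indices inside $S$. Everything else is routine linear algebra on finite-dimensional polynomial spaces.
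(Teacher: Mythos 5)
The paper states Lemma~\ref{lem:cPSBas} without proof (it is treated as a standard fact about downward closed index sets), so there is no in-paper argument to compare against; judged on its own, your proof is correct and is the standard one. The two ingredients you isolate --- the triangular, invertible univariate change of basis between any two systems of precise degree, and the observation that tensorizing it only produces multi-indices $\bsmu\le\bsnu$, which downward closedness of $S$ keeps inside the index set --- are exactly what is needed, and the symmetry of the argument gives both inclusions. One small bookkeeping point: with the paper's convention that $P_\bsmu$ is the product over the \emph{nonzero} entries of $\bsmu$ only, the factors $p_0(y_j)$ arising from indices $j\in\supp(\bsnu)$ with $k_j=0$ are nonzero constants but not necessarily equal to $1$ (e.g.\ $T_0=\pi^{-1/2}$ for the normalized \Tsch{} system), so your coefficient $c_{\bsnu,\bsmu}=\prod_{j\in\supp(\bsnu)}a_{\nu_j,\mu_j}$ should additionally carry a factor $p_0^{m}$ with $m=\#\{j\in\supp(\bsnu):\mu_j=0\}$; this is a constant and does not affect the span argument, but the formula as written is only exact when $p_0\equiv 1$.
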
 
For $0<p\leq \infty$, we denote by $\ell^p(\cF)$ the sets of
sequences $\bsc  = (c_\bsnu )_{\bsnu\in \cF}$ such that
\begin{align*}
	\| \bsc \|_{\ell^p(\cF)} 
	&:= 
	\left( \sum_{\bsnu\in \cF} | c_\bsnu |^p \right)^{1/p}
	<\infty \quad \text{for} \quad 0<p<\infty\;, \\
	\| \bsc \|_{\ell^\infty(\cF)} 
	&:= \sup_{\bsnu \in \cF} |c_\bsnu| < \infty
	\quad \text{for} \quad p = \infty\;.
\end{align*}
We do not distinguish between real-valued and complex-valued sequences,
with $|\cdot|$ denoting absolute value and complex modulus, respectively.
%
\begin{definition}\label{def:Bstnterm}
	For $\Lambda \subseteq \cF$, $0<p\leq \infty$, $\bsc \in \ell^p(\Lambda)$ 
	and for $n\in \IN_0$, with $n\leq \#(\Lambda)$, 
	the 
	\emph{$\ell^p$-norm best $n$-term approximation error of $\bsc$} 
	is given by
	$$
	\sigma_n(\bsc)_p := \min\{ \| \bsc - \tilde{\bsc} \|_p : \tilde{\bsc}\in \ell^p(\Lambda),
	|{\rm supp}(\tilde{\bsc})| \leq n \}
	\;.
	$$
\end{definition}
For $f\in L^2_\varrho(\IU)$ as in \cref{eq:L2Series}, 
for $p=2$ we obtain
$$
\sigma_n(\bsc)_2 = \| f - f_n \|_{L^2_\varrho(\IU)} \;,
$$
where $f_n$ is a best $n$-term Chebyshev-polynomial approximation for $f\in L^2_\varrho(\IU)$.
%

\subsubsection{$n$-term Taylor-gPC approximation}
\label{sec:BestnTay}
The following result on the convergence rate of $n$-term truncations 
of Taylor-gPC expansions of $(\bsb,\epsilon)$-holomorphic, parametric 
functions is proved as \cite[Thm.~2.7]{ChSJZDlHd19}.
\begin{proposition}\label{prop:TaygPC}
	Let $u:\cU\to \IR$ be $(\bsb,\epsilon)$-holomorphic 
	for some sequence $\bsb \in (0,\infty)^\IN \cap \ell^p(\IN)$ for some $0<p\leq 1$. 
	Then the sequence $( t_\bsnu )_{\bsnu\in \cF} \in \ell^p(\cF)$. There exists a constant $C>0$ and a sequence $\{ \Lambda_n \}_{n\geq 1}$
	of nested, d.c. index sets $\Lambda_n\subseteq \cF$ such that for all $n\in \IN$ 
	holds $|\Lambda_n| \leq  n$ ,
	and the parametric function $u$ in \cref{eq:TaylorgPC} 
	satisfies  
	\begin{equation}\label{eq:TaygPC1}
		\sup_{\bsy \in \cU} \left| u(\bsy) - \sum_{\bsnu\in \Lambda_n} t_\bsnu \bsy^\bsnu \right| 
		\leq 
		\sum_{\bsnu\in \cF\backslash \Lambda_n} |t_\bsnu| 
		\leq C n^{-(1/p-1)} \;,
	\end{equation}
	and
	\begin{equation}\label{eq:TaygPC2}
		\max_{\bsnu \in \Lambda_n} |\bsnu|_1 \leq C (1+\log(n)) \;.
	\end{equation}
	Furthermore, if $\boldsymbol{e}_j\in \Lambda_n$ for some $j\in \mathbb N$, then also
	$\boldsymbol{e}_i\in \Lambda_n$ for all $i=1,2,...,j$ (this property of the 
	index sets $\Lambda_n$ is also referred to as ``anchored'').
\end{proposition}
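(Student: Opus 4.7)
\medskip
\noindent\textbf{Proof plan.} The first inequality in \eqref{eq:TaygPC1}, namely
$\sup_{\bsy\in\cU}|u(\bsy)-\sum_{\bsnu\in\Lambda_n}t_\bsnu\bsy^\bsnu| \leq \sum_{\bsnu\in\cF\setminus\Lambda_n}|t_\bsnu|$, is immediate from the triangle inequality and $|\bsy^\bsnu|\leq 1$ for $\bsy\in\cU$, once absolute convergence of the Taylor series is established. So the heart of the matter is twofold: (i) show $(t_\bsnu)_{\bsnu\in\cF}\in\ell^p(\cF)$ with quantitative control, and (ii) construct nested, downward-closed, anchored sets $\Lambda_n$ of cardinality $\leq n$ that realize the Stechkin-type rate $n^{-(1/p-1)}$ and satisfy the $|\bsnu|_1$ bound.

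\medskip
For step (i), the plan is to exploit the $(\bsb,p)$-holomorphy from Definition~\ref{def:bepsHol}. Since $u$ extends holomorphically to a polydisc-type neighborhood $\mathcal{D}(\bsb,\epsilon)$ (recall \eqref{eq:HolDisc}), Cauchy's integral formula gives, for every $(\bsb,\epsilon)$-admissible polyradius $\bsrho$, the estimate $|t_\bsnu|\leq M\bsrho^{-\bsnu}$, where $M$ bounds $U$ on $\mathcal{D}(\bsb,\epsilon)$. Optimizing the choice of $\bsrho$ index-by-index subject to the admissibility constraint $\sum_j b_j(\rho_j-1)\leq\epsilon$ (for instance via the Lagrangian-type choice $\rho_j = \max(1,\alpha/b_j)$ used in the Cohen--DeVore--Schwab analysis) yields weighted bounds on $t_\bsnu$ in terms of products of $b_j^{\nu_j}$. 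Summing the $p$-th powers, the combinatorial identity $\sum_{\bsnu\in\cF}\bsb^{p\bsnu}=\prod_j(1-b_j^p)^{-1}$ together with $\bsb\in\ell^p$ then gives $(t_\bsnu)\in\ell^p(\cF)$. Applying Stechkin's lemma (in $\ell^1$, which requires $p<1$; the case $p=1$ degenerates and one absorbs the logarithmic factor as in \cite{ChSJZDlHd19}) to the tail of the $n$ largest coefficients yields $\sigma_n(\bst)_1 \leq Cn^{-(1/p-1)}$.

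\medskip
For step (ii), the nontrivial obstacle is to upgrade the \emph{``$n$ largest coefficients''} set $\Lambda_n^\star$ to a nested, downward-closed, anchored family without losing the rate. The plan is to use the key monotonicity of Taylor coefficients implied by $(\bsb,\epsilon)$-holomorphy: if $\bsmu\le\bsnu$ componentwise then $\bsrho^{-\bsmu}\geq\bsrho^{-\bsnu}$ for $\bsrho\geq\mathbf{1}$, so (after rearranging the $b_j$ to be nonincreasing, exploiting symmetry in the index) the majorants are themselves downward closed and anchored. Concretely, one selects $\Lambda_n$ to be the indices of the $n$ largest \emph{majorants} rather than coefficients; this automatically yields a nested, d.c., anchored family. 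A standard counting argument (see \cite{CDS11,ChSJZDlHd19}) shows this modification changes the error only by an absolute multiplicative constant, so the rate $n^{-(1/p-1)}$ persists.

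\medskip
Finally, for the logarithmic bound \eqref{eq:TaygPC2}, the plan is to observe that for any fixed admissible $\bsrho$ with $\min_j\rho_j\geq r>1$ on $\mathrm{supp}(\bsnu)$, one has $|t_\bsnu|\leq M r^{-|\bsnu|_1}$. Since $\Lambda_n$ only contains coefficients of magnitude at least of order $n^{-1/p}$ (up to constants, by Stechkin), any $\bsnu\in\Lambda_n$ satisfies $r^{-|\bsnu|_1}\gtrsim n^{-1/p}$, i.e.\ $|\bsnu|_1\leq C(1+\log n)$. The main technical obstacle in the whole argument is the construction in step (ii): reconciling the best-$n$-term optimality with the structural constraints (d.c.\ and anchored) while preserving the rate requires careful use of the joint monotonicity structure of the majorant sequence and of $\bsb$, which is exactly the content of \cite[Thm.~2.7]{ChSJZDlHd19}.
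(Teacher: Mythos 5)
First, a remark on provenance: the paper does not actually prove this proposition, but quotes it verbatim from \cite[Thm.~2.7]{ChSJZDlHd19}. Your overall route (Cauchy estimates on $(\bsb,\epsilon)$-admissible polydiscs, $\ell^p$-summability of the Taylor coefficients via a product-type monotone majorant, Stechkin's lemma, and selection of the $n$ largest terms of the majorant to obtain downward closed, anchored sets) is the same strategy used in that reference and in \cite{CDS11}, so step (i) and the first part of step (ii) are sound in outline — modulo the usual care that the optimized majorant is not simply $\bsb^{\bsnu}$ and that finitely many $b_j$ may exceed $1$, so the identity $\sum_{\bsnu}\bsb^{p\bsnu}=\prod_j(1-b_j^p)^{-1}$ requires the standard rescaling or splitting of indices before it can be invoked.

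The genuine gap is in your argument for \eqref{eq:TaygPC2}. Stechkin's lemma gives an \emph{upper} bound of order $n^{-1/p}$ on the $n$-th largest majorant; it gives no \emph{lower} bound on the values retained in $\Lambda_n$, so the inference ``$\bsnu\in\Lambda_n$ implies $r^{-|\bsnu|_1}\gtrsim n^{-1/p}$, hence $|\bsnu|_1\le C(1+\log n)$'' is unjustified. It is in fact false that the $n$ largest terms of a monotone $\ell^p$ majorant automatically obey a logarithmic bound on the total order: if, say, $a_{m\bse_1}=2^{-m}$ dominates all other entries, the set of $n$ largest is $\{0,\bse_1,\dots,(n-1)\bse_1\}$ and $\max_{\bsnu\in\Lambda_n}|\bsnu|_1=n-1$. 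The correct argument \emph{enforces} the constraint $|\bsnu|_1\le C(1+\log n)$ in the definition of $\Lambda_n$ and then verifies that discarding the high-total-order indices does not degrade the rate: choosing the admissible polyradius $\rho_j=r:=1+\epsilon/\|\bsb\|_{\ell^1}$ on $\supp(\bsnu)$ and $\rho_j=1$ otherwise yields the uniform exponential bound $|t_\bsnu|\le M r^{-|\bsnu|_1}$, at most $n$ indices are removed from the best-$n$-term set, and their total contribution is at most $nMr^{-C(1+\log n)}=O\bigl(n^{1-C\log r}\bigr)\le O\bigl(n^{-(1/p-1)}\bigr)$ once $C\ge 1/(p\log r)$. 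With this repair (and a consistent tie-breaking rule so that the family remains nested), the remainder of your plan goes through.
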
  
\subsubsection{$n$-term Chebyshev-gPC approximation}
\label{sec:Bestnterm}
We are primarily interested in convergence rates of finite-parametric truncations
of the Chebyshev-gPC expansions whose coefficient sequences $\bsc\in \ell^p(\cF)$
for some $0<p<2$.
The following approximation rate bounds of best $n$-term Chebyshev-polynomial
approximations of certain \emph{holomorphic} $f\in L^2_\varrho(\IU)$ hold true.
Each of these results will be shown to allow a quantum analogue, via
suitable quantum circuits emulating Chebyshev-polynomials through the LCU
technique of \cite{Childs2012lcu}. The next theorem is a special case of \cite[Thm.~2.2.10]{Z18_2760}. 
Similar and related statements can be found in the earlier publications \cite{CDS10,CDS11,CCS13,CCS15}. 
\begin{theorem}[infinite-dimensional case]\label{thm:AlgInfDim}
	Assume $d=\infty$ and that
	$f\in \cH(\bsb,p,\epsilon)$ for some
	$0<p\leq 1$, $\epsilon>0$ and
	$\bsb\in (0,\infty)^\IN\cap \ell^p(\IN)$. Then, for every $n\in \IN$
	there exists a dc index set
	$\Lambda_n\subseteq \cF$
	of cardinality $n$ such that
	\begin{equation}\label{eq:LnlogBd}
		\max_{\bsnu\in \Lambda_n} | \bsnu |_1 \leq C(1+\log(n)) 
	\end{equation}
	and with the truncated Chebyshev-expansion 
	$f^{\operatorname{Cheb}}_{\Lambda_n} := \sum_{\bsnu\in \Lambda_n} c_\bsnu T_\bsnu$
	it holds
	\begin{equation}\label{eq:InfDimErr}
		\| f - f^{\operatorname{Cheb}}_{\Lambda_n} \|_{L^2_\varrho(\IU)}
		\leq 
		C(f) n^{-1/p+1/2} ,
		\quad 
		\| f - f^{\operatorname{Cheb}}_{\Lambda_n} \|_{L^\infty(\IU)}
		\leq 
		C(f) n^{-1/p+1} ,
	\end{equation}
	for some constant $C(f)$.
\end{theorem}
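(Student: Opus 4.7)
The plan is to derive the rates from weighted upper bounds on the \Tsch-gPC coefficients $c_\bsnu$ obtained via holomorphy on the poly-ellipses inside $\cE(\bsb,\epsilon)$, then to organize the largest coefficients into a downward closed index set while controlling its ``diameter'' $\max_{\bsnu\in\Lambda_n}|\bsnu|_1$ logarithmically.

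First I would establish a pointwise coefficient bound. Since $f$ extends holomorphically and boundedly to every polyellipse $\cE_\bsrho\subseteq \cE(\bsb,\epsilon)$, a standard contour argument for \Tsch-coefficients (via the Joukowski map on each coordinate) gives, for every $(\bsb,\epsilon)$-admissible polyradius $\bsrho\in [1,\infty)^\IN$,
\begin{equation*}
|c_\bsnu| \;\leq\; C(f)\, (2/\pi)^{|\bsnu|_0/2}\prod_{j\in\supp(\bsnu)}\rho_j^{-\nu_j}.
\end{equation*}
Minimising over $\bsrho$ subject to the admissibility constraint $\sum_j b_j((\rho_j+1/\rho_j)/2-1)\leq\epsilon$ would then produce a dominating envelope $\beta_\bsnu\geq|c_\bsnu|$ that is monotone non-increasing with respect to the partial order on $\cF$ (because enlarging any $\nu_j$ only adds a factor $\rho_j^{-1}<1$).

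Second I would show that the envelope sequence $\bsbeta=(\beta_\bsnu)_{\bsnu\in\cF}$ lies in $\ell^p(\cF)$. This is the main technical step: one rewrites $\sum_\bsnu \beta_\bsnu^p$ as a product over $j$ of univariate sums that, under a suitable choice $\rho_j=1+\delta/b_j^{\alpha}$ compatible with admissibility, are each bounded by $1+C b_j^{p}$, and then uses $\bsb\in\ell^p(\IN)$ to pass to the infinite product; this is the standard Cohen--DeVore--Schwab type argument underlying Stechkin-based rates. Having $\bsbeta\in\ell^p(\cF)$ yields $(c_\bsnu)\in\ell^p(\cF)$.

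Third, define $\Lambda_n$ as an index set of cardinality $n$ containing $n$ multi-indices with largest values of $\beta_\bsnu$. Monotonicity of $\bsbeta$ makes this choice downward closed (picking ``largest'' in a monotone tensorised sequence always admits a d.c.\ representative). Then Stechkin's lemma applied to $\bsbeta\in\ell^p(\cF)$ yields
\begin{equation*}
\Bigl(\sum_{\bsnu\notin\Lambda_n}|c_\bsnu|^2\Bigr)^{1/2}\leq C\,n^{-(1/p-1/2)}, \qquad \sum_{\bsnu\notin\Lambda_n}|c_\bsnu|\leq C\,n^{-(1/p-1)}.
\end{equation*}
By Parseval in $L^2_\varrho(\IU)$ the first gives the $L^2$-bound of \eqref{eq:InfDimErr}, while the triangle inequality combined with $\|T_\bsnu\|_{L^\infty(\IU)}\leq 1$ converts the second into the stated $L^\infty$-bound. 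The logarithmic bound \eqref{eq:LnlogBd} is obtained as a by-product of the envelope: the $n$-th largest value of $\bsbeta$ is bounded below by a quantity of order $n^{-1/p}$, and since $\beta_\bsnu$ decays geometrically in $|\bsnu|_1$ (via $\prod\rho_j^{-\nu_j}$ with $\rho_j>1$ eventually bounded away from $1$ along each coordinate in the relevant regime), any $\bsnu\in\Lambda_n$ must satisfy $|\bsnu|_1=O(\log n)$.

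The hard part is the second step: producing a single monotone dominating envelope that both pointwise upper-bounds $|c_\bsnu|$ and preserves the $\ell^p$-summability with the \emph{same} exponent $p$. The naive, $\bsnu$-by-$\bsnu$ optimisation of $\bsrho$ gives only coefficient-wise control; closing the proof requires the delicate choice of admissible $\bsrho$ (with $\rho_j-1$ tied to $b_j$) and the combinatorial identity turning $\sum_\bsnu\beta_\bsnu^p$ into a convergent infinite product controlled by $\sum_j b_j^p$, exactly as in the proofs underlying \cite[Thm.~2.2.10]{Z18_2760}.
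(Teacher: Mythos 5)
Your proposal follows the same Stechkin-based route as the paper, but where the paper simply imports the key ingredient from \cite[Thm.~2.2.10]{Z18_2760} --- namely the existence of a majorant $(a_\bsnu)_{\bsnu\in\cF}\in\ell^p(\cF)$ that is monotone with respect to the partial order on $\cF$ and satisfies $w_\bsnu|c_\bsnu|\le a_\bsnu$ --- you sketch how to construct such a majorant from scratch, via Cauchy-type coefficient bounds on admissible poly-ellipses followed by the factorization of $\sum_{\bsnu}\beta_\bsnu^p$ into an infinite product controlled by $\sum_j b_j^p$. Everything downstream coincides with the paper's argument: $\Lambda_n$ is taken as $n$ largest majorant values, downward closedness follows from monotonicity of the majorant, the decay $a_{\bsnu_j}\le Cj^{-1/p}$ is exactly \eqref{eq:ai}, Parseval gives the $L^2_\varrho$ rate, and the triangle inequality with the uniform bound on $\|T_\bsnu\|_{L^\infty(\IU)}$ gives the $L^\infty$ rate. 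Your identification of the envelope's $\ell^p$-summability as the technical crux, with pointer to the right reference, is accurate.

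One step in your sketch is incorrect as stated: to justify \eqref{eq:LnlogBd} you assert that the $n$-th largest envelope value is bounded \emph{below} by a quantity of order $n^{-1/p}$. Summability only yields the upper bound $\beta_{\bsnu_n}\le Cn^{-1/p}$; no lower bound on the threshold follows from $\ell^p$ membership, since the envelope could decay arbitrarily fast. The standard repair uses the tensor structure of the envelope rather than its summability: if $|\bsnu|_1=m$ then $\beta_\bsnu\le A\kappa^{m}$ for some $\kappa<1$, while the box of multi-indices supported on the first few coordinates with entries at most $K$ supplies on the order of $(K+1)^{J}$ indices $\bsmu$ whose envelope values exceed $A\kappa^{m}$ once $JK$ is proportional to $m$; membership of $\bsnu$ in $\Lambda_n$ then forces $2^{cm}\le n$, i.e. $m=O(\log n)$. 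Note that the paper's own proof does not verify \eqref{eq:LnlogBd} either, deferring it to the cited result, so this is a gap relative to the full statement rather than a divergence from the paper's written argument.
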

	
	
\begin{proof}
	Throughout this proof fix $\tau>0$, such that for all $\bsnu\in\cF$ holds
	\begin{equation}\label{eq:ChebBound}
		\| T_\bsnu \|_{L^\infty(\IU)}\le \prod_{i\in\IN}(1+\nu_i)^\tau=:w_\bsnu\qquad\forall \bsnu\in\cF
	\end{equation}
	See for instance \cite[Appendix B.2.1]{Z18_2760}. According to \cite[Thm.~2.2.10 (i)-(iii)]{Z18_2760} (in the Chebyshev-polynomial case, i.e., with $\alpha=\beta=-1/2$), 
	there exists a sequence $(a_\bsnu)_{\bsnu\in\cF}$ 
	of nonnegative real numbers with the following properties: it holds that
	\begin{equation}\label{eq:tau}
		w_\bsnu |c_\bsnu|\le a_\bsnu \qquad\forall \bsnu\in\cF,
	\end{equation}
	$(a_\bsnu)_{\bsnu\in\cF}\in\ell^p(\cF)$, and $(a_\bsnu)_{\bsnu\in\cF}$ is monotonically decreasing 
	in the sense that $a_\bsnu\ge a_\bsmu$ whenever $\bsnu\le\bsmu$.
	Let $(\bsnu_i)_{i\in\IN}$ be an arbitrary enumeration of $\cF$ such that $(a_{\bsnu_i})_{i\in\IN}$ 
	is monotonically decreasing and $\bsnu_i\le\bsmu_j$ implies $i\le j$. 
	Such an enumeration exists due to the above stated properties. Then, for each $n\in\IN$
	\begin{equation*}
		\Lambda_n:=\set{\bsnu_i:i\le n}\subseteq\cF
	\end{equation*}
	defines a d.c. set of cardinality $n$ such that $(a_{\bsnu_i})_{i=1}^n$ 
	corresponds to $n$ largest numbers of the sequence $(a_\bsnu)_{\bsnu\in\cF}$. It remains to verify the error bounds \cref{eq:InfDimErr}, 
	which are a consequence of Stechkin's lemma \cite{stechkin}. 
	We start with $L_\varrho^2(\IU)$. 
	Using Parseval's identity
	\begin{align}\label{eq:L2rho}
		\|f-f^{\operatorname{Cheb}}_{\Lambda_n}\|_{L^2_\varrho(\IU)}^2
		=
		\| \sum_{\bsnu\in\cF} c_\bsnu T_\bsnu - \sum_{\bsnu\in\Lambda_n} c_\bsnu T_\bsnu \|_{L^2_\varrho(\IU)}^2
		=\sum_{\bsnu\in\cF\backslash\Lambda_n} c_\bsnu^2 \le \sum_{i>n} a_{\bsnu_i}^2.
	\end{align}
	As the sequence  
	$(a_{\bsnu_i})\in\ell^p(\IN)$ 
	is monotonically decreasing, 
	it holds for all $j\in\IN$
	\begin{equation}\label{eq:ai}
		a_{\bsnu_j}^p 
		\le \frac{1}{j}\sum_{i=1}^j a_{\bsnu_i}^p 
		\le \frac{1}{j}\sum_{i\in\IN}a_{\bsnu_i}^p\quad\text{so that}\quad
		a_{\bsnu_j}\le C j^{-1/p},
	\end{equation}
	where 
	$C:=\|(a_\bsnu)_{\bsnu\in\cF}\|_{\ell^p}<\infty$. 
	Therefore
	\begin{equation*}
		\sum_{i>n}a_{\bsnu_i}^2\le C^2 \sum_{i>n}i^{-2/p}\lesssim n^{-2/p+1},
	\end{equation*}
	which, together with \cref{eq:L2rho}, 
	gives the desired bound in $L^2_\varrho(\IU)$. 
	For the $L^\infty(\IU)$ bound we proceed in a similar fashion. 
	Using \cref{eq:ChebBound} and \cref{eq:tau}, 
	\begin{align*}
		\|f-f^{\operatorname{Cheb}}_{\Lambda_n}\|_{L^\infty(\IU)}
		&=
		\|\sum_{\bsnu\in\cF}c_\bsnu T_\bsnu-\sum_{\bsnu\in\Lambda_n}c_\bsnu T_\bsnu \|_{L^\infty(\IU)}
		\\
		&\le \sum_{\bsnu\in\cF\backslash\Lambda_n}|c_\bsnu|\|T_\bsnu\|_{L^\infty(\IU)} \le \sum_{i>n}a_{\bsnu_i}.
	\end{align*}
	Together with \cref{eq:ai} this gives
	\begin{equation*}
		\|f-f^{\operatorname{Cheb}}_{\Lambda_n}\|_{L^\infty(\IU)} \le C \sum_{i>n} i^{-1/p}\lesssim n^{-1/p+1}
	\end{equation*}
	and completes the proof.
\end{proof}
The point of the preceding result is that, formally,
the function $f$ may depend on a countable number of coordinates
$\bsy = (y_j)_{j\geq 1}$.
The  approximation rate bounds
\cref{eq:InfDimErr} are independent of the number of
``active'' coordinates contributing to the $n$-term approximants $f_{\Lambda_i}$.
The approximation rate bounds \cref{eq:InfDimErr} only depend on
the sparsity in the coefficient sequences $\bsc$, 
as expressed through the summability exponent $p\in (0,1)$.
In this sense, the convergence rate bounds 
\cref{eq:InfDimErr} do not incur the CoD. 
Theorem~\cref{thm:AlgInfDim} is formulated for $d=\infty$.
It is well-known and classical, that for $1\leq d <\infty$,
for $f\in \cH(\bsrho)$, suitably truncated $n$-term 
Chebyshev-expansions converge exponentially \cite{HTW17,OSZ2022}.
\begin{theorem}\label{thm:ExpFinDim}
	{(finite-dimensional case)}
	Assume that $1\leq d < \infty$ and that $\IU = [-1,1]^d$.
	Let $f\in \cH(\bsrho)$ for some $\bsrho \in (1,\infty)^d$
	and
	let $\bsc = (c_\bsnu)_{\bsnu\in \IN_0^d} \in \ell^2(\IN_0^d)$
	denote the sequence of Chebyshev-coefficients of $f$. 

        Then, 
		there exists a constant $C>0$ (depending on $d$ and on $f$) 
		and a constant $\gamma>0$ (depending on $\bsrho$) 
		such that,
		for every $n\in \IN$, there exists 
		a dc set $\Lambda_n \subseteq \IN_0^d$ with $|\Lambda_n|\leq n$ 
		such that there holds 
		\begin{equation}\label{eq:FinDimErr}
			\| f - f^{\operatorname{Cheb}}_{\Lambda_n} \|_{L^2_\varrho(\IU)} 
			\leq 
			\| f - f^{\operatorname{Cheb}}_{\Lambda_n} \|_{L^\infty_\varrho(\IU)} 
			\leq 
			C\exp(- \gamma n^{1/d}) 
		\end{equation}
		where the norms are as in \cref{eq:defLp} 
		and the polynomial approximation is 
		$f^{\operatorname{Cheb}}_\Lambda := \sum_{\bsnu\in \Lambda} c_\bsnu T_\bsnu$. The index sets $\Lambda_n \subseteq \IN_0^d$ 
		in \cref{eq:FinDimErr}
		can be chosen as product sets
		\begin{equation}\label{eq:LtTP}
			\Lambda_n 
			:= 
			\{ \bsnu\in \IN_0^d : | \bsnu |_\infty \leq k \} 
			= 
			\{ 0,1,2,...,k\}^d 
		\end{equation}
		with $n = \#(\Lambda_n) = \mathcal O (k^d)$. 
		The index sets $\Lambda_n$ are nested, and are d.c.
		The approximation $f^{\operatorname{Cheb}}_\Lambda$ is the tensor product projection of 
		$f$ onto $\IQ_k^d = \IP_k^{\otimes d}$.
\end{theorem}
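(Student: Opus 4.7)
The plan is to combine the classical univariate Chebyshev coefficient decay bound for functions holomorphic on a Bernstein ellipse with a tensorization argument, and then sum the geometric tail over the complement of the tensor-product index set $\Lambda_n$.

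First, I would recall the univariate result: if $g \in \mathcal{H}(\rho)$ for a single $\rho > 1$, then the (classical) Chebyshev coefficients satisfy $|c_k| \leq C \rho^{-k}$ with $C$ depending on $\|g\|_{L^\infty(\mathcal{E}_\rho)}$. This follows by writing $c_k$ as a contour integral against the Joukowski map and deforming to the boundary of $\mathcal{E}_\rho$. Applying this argument separately in each coordinate (via Cauchy's formula on the poly-ellipse $\mathcal{E}_\bsrho$ and Fubini) produces a multivariate bound of the form
\begin{equation*}
|c_\bsnu| \leq C(f,d) \prod_{j=1}^d \rho_j^{-\nu_j} \qquad \forall \bsnu \in \IN_0^d,
\end{equation*}
where the normalization constants from passing to the physical Chebyshev polynomials $T_\bsnu$ can be absorbed into $C(f,d)$ since they are bounded uniformly by $(2/\pi)^{d/2}$.

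Next, I would choose $\Lambda_n = \{0,1,\dots,k\}^d$ with $k = \lfloor n^{1/d} \rfloor$, which is clearly downward closed, nested in $k$, and of cardinality $(k+1)^d = O(k^d)$. Using $\|T_\bsnu\|_{L^\infty(\IU)} \leq 1$ from Section~\ref{sec:TschgPC}, I would estimate the $L^\infty$ error by the triangle inequality:
\begin{equation*}
\|f - f^{\operatorname{Cheb}}_{\Lambda_n}\|_{L^\infty(\IU)}
\leq \sum_{\bsnu \notin \Lambda_n} |c_\bsnu|
\leq C \sum_{\bsnu \notin \Lambda_n} \prod_{j=1}^d \rho_j^{-\nu_j}.
\end{equation*}
The complement $\IN_0^d \setminus \Lambda_n$ is the union over $j = 1,\dots,d$ of the slabs $\{\nu_j > k\}$, so a union bound and evaluation of the resulting geometric series in each coordinate give
\begin{equation*}
\sum_{\bsnu \notin \Lambda_n} \prod_{j=1}^d \rho_j^{-\nu_j}
\leq \sum_{j=1}^d \frac{\rho_j^{-(k+1)}}{1-\rho_j^{-1}}\prod_{i\neq j} \frac{1}{1-\rho_i^{-1}}
\leq C'(d,\bsrho)\, \rho_{\min}^{-k},
\end{equation*}
where $\rho_{\min} := \min_{1\leq j \leq d} \rho_j > 1$. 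Setting $\gamma := \log(\rho_{\min}) > 0$ and recalling $k \geq n^{1/d} - 1$ yields $\|f - f^{\operatorname{Cheb}}_{\Lambda_n}\|_{L^\infty(\IU)} \leq C \exp(-\gamma n^{1/d})$, with $C$ absorbing the factor $\rho_{\min}$.

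For the $L^2_\varrho$ bound, Parseval's identity together with $\|T_\bsnu\|_{L^2_\varrho} = 1$ gives $\|f - f^{\operatorname{Cheb}}_{\Lambda_n}\|_{L^2_\varrho(\IU)}^2 = \sum_{\bsnu \notin \Lambda_n} |c_\bsnu|^2$, which is dominated by $(\sum_{\bsnu \notin \Lambda_n}|c_\bsnu|)^2$ and hence by the squared $L^\infty$ bound just derived; alternatively one may apply $\|\cdot\|_{L^2_\varrho} \leq \|\cdot\|_{L^\infty}$ directly since $\varrho$ is a probability measure, which gives the chain of inequalities in \eqref{eq:FinDimErr}. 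That $\Lambda_n$ is a tensor-product projector onto $\IQ_k^d$, is nested, and downward closed is immediate from its definition. The proof involves no substantive obstacle; the only mild care needed is matching the normalization of physical vs.\ classical Chebyshev polynomials and ensuring the constants $C(f,d)$, $\gamma$ in the final bound are traced correctly through the tensorized Cauchy estimate and the geometric tail.
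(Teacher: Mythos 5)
Your proposal is correct, but note that the paper itself gives no proof of Theorem~\ref{thm:ExpFinDim}: it is stated as a classical fact and delegated to the cited references (the text preceding the theorem points to \cite{HTW17,OSZ2022}). Your argument is the standard self-contained derivation that underlies those references: the Bernstein-type bound $|c_\bsnu|\leq C(f,d)\prod_{j}\rho_j^{-\nu_j}$ obtained by contour deformation through the Joukowski map in each coordinate, followed by a union bound over the slabs $\{\nu_j>k\}$ and summation of the geometric tails. All the essential steps are sound, including the reduction $\|\cdot\|_{L^2_\varrho}\leq\|\cdot\|_{L^\infty}$ for the probability measure $\varrho$ and the absorption of the normalization between the classical polynomials $\tilde T_\bsnu$ and the orthonormalized $T_\bsnu$ into $C(f,d)$. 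One small point deserves a fix: with $k=\lfloor n^{1/d}\rfloor$ the set $\Lambda_n=\{0,\dots,k\}^d$ has cardinality $(k+1)^d$, which can exceed $n$ (e.g.\ $d=2$, $n=2$ gives $4>2$), so the stated constraint $|\Lambda_n|\leq n$ fails; choosing instead $k:=\lfloor n^{1/d}\rfloor-1$ (so $(k+1)^d\leq n$, and still $k\geq n^{1/d}-2$) preserves the bound $C\exp(-\gamma n^{1/d})$ after adjusting $C$ by a factor $\rho_{\min}^{2}$. With that adjustment your proof is complete and, unlike the paper, makes the dependence of $\gamma=\log\rho_{\min}$ on $\bsrho$ and of $C$ on $d$ and $\|f\|_{L^\infty(\cE_\bsrho)}$ fully explicit.
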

\subsection{Quantum circuit design via LCUs}
\label{sec:Qc}
We review the PQC design via LCUs, 
as developed in \cite{Childs2012lcu,gilyen2019qsvt}.
In \cref{subsection:qsp}, 
we discuss the quantum signal processing (QSP) algorithm. 
In \cref{lcu-sec}  , 
we present the linear combination of unitaries (LCU) technique,
with particular attention on the emulation of Chebyshev-polynomials. 
\subsubsection{Quantum Signal Processing}
\label{subsection:qsp} 
Quantum signal processing (QSP) represents a real, scalar polynomial of
one variable and of degree $d$ using a product of 
unitary matrices of size $2 \times 2$ representing suitable single qubit rotation and single qubit phase gates that are parameterized by $d+1$ real numbers 
$\{\psi_j\}_{j=0}^d\in [-\pi/2,\pi/2]^{d+1}$ 
which are referred to as 
``phase factors'', or ``phase angles'', see also \cite{dong2022infiniteqsp}; 
we adopt the latter terminology. More formally, 
let $x \in [-1, 1]$ be a scalar with a one-qubit encoding:
\begin{equation*}\label{equation:encoding}
	W(x) := 
	\begin{pmatrix}
		x & i\sqrt{1 - x^2} \\
		i\sqrt{1 - x^2} & x
	\end{pmatrix}
	:= e^{i \arccos(x) \sigma_X},
	\quad \quad
	\theta \in [0,\pi].
\end{equation*}
For a vector of phase-angles
$\boldsymbol \varphi = (\varphi_0, \varphi_1, \ldots, \varphi_\ell) \in \mathbb R^{\ell+1}$, 
consider the following sequence of quantum gates:
\begin{equation}\label{equation:qsp-ansatz}
	V_{\boldsymbol \varphi}(x) = e^{i\varphi_0 \sigma_Z} W(x) e^{i\varphi_1 \sigma_Z} W(x) e^{i\varphi_2 \sigma_Z} \ldots W(x) e^{i\varphi_\ell \sigma_Z}.
\end{equation}
The main result that we now discuss is 
which class of polynomial functions can be encoded using 
the sequence of gates in \cref{equation:qsp-ansatz}.
\begin{proposition}\label{thm:qsp-complex-poly} \cite[Theorem 3]{gilyen2019qsvt}
	Let $\ell \in \mathbb{N}$.
	There exists 
	$\boldsymbol \varphi = (\varphi_0, \varphi_1, \ldots, \varphi_\ell) \in \mathbb{R}^{\ell+1}$ 
	such that for all $x\in[-1,1]$
	\begin{equation*}\label{equation:qsp-result}
		V_{\boldsymbol{\varphi}}(x) 
		= e^{i\varphi_0\sigma_Z} \prod_{j=1}^{\ell} \left( W(x) e^{i\varphi_j\sigma_Z} \right)
		=
		\begin{pmatrix}
			p(x) & iq(x)\sqrt{1-x^2} \\
			iq^*(x)\sqrt{1-x^2} & p^*(x)
		\end{pmatrix}
	\end{equation*}
	if and only if $p, q \in \mathbb{C}[x]$ such that:
	\begin{enumerate}
		\item[(i)] $\operatorname{deg}(p(x)) \leq \ell$ and $\operatorname{deg}(q(x)) \leq \ell - 1,$
		\item[(ii)] $p(x)$ has parity $(\ell \mod 2)$ and $q(x)$ has parity $(\ell - 1 \mod 2)$,
		\item[(iii)] For all  $x \in [-1, 1]$, we have $|p(x)|^2 + (1 - x^2) |q(x)|^2 = 1$.
	\end{enumerate}
\end{proposition}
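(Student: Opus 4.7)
I would prove both directions by induction on $\ell$.

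For the forward direction (necessity), the base case $\ell = 0$ is immediate: $V_{\boldsymbol{\varphi}}(x) = e^{i\varphi_0 \sigma_Z}$ is diagonal with $p(x) = e^{i\varphi_0}$ (degree $0$, even parity) and $q(x) = 0$ (trivially satisfying the degree and parity constraints), while unitarity yields $|p|^2 = 1$. For the inductive step, I would use the factorization
\[
V_{(\varphi_0,\ldots,\varphi_\ell)}(x) \;=\; V_{(\varphi_0,\ldots,\varphi_{\ell-1})}(x)\cdot W(x)\cdot e^{i\varphi_\ell \sigma_Z},
\]
apply the induction hypothesis to the inner factor to obtain polynomials $\tilde p, \tilde q$ with $\deg \tilde p \le \ell - 1$ and $\deg \tilde q \le \ell - 2$, with the appropriate parities, and compute the product explicitly. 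A direct calculation shows that the new top-left entry equals $e^{i\varphi_\ell}[x \tilde p(x) - (1-x^2) \tilde q(x)]$ and the new top-right entry equals $i e^{-i\varphi_\ell}[\tilde p(x) + x \tilde q(x)] \sqrt{1-x^2}$, yielding the required degree and parity of $p, q$. The normalization $|p(x)|^2 + (1-x^2)|q(x)|^2 = 1$ is the determinant identity of the unitary $V_{\boldsymbol{\varphi}}(x)$.

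For the reverse direction (sufficiency), I again induct on $\ell$. The case $\ell = 0$ forces $p$ constant with $|p| = 1$ and $q \equiv 0$, so $\varphi_0 := \arg(p)$ suffices. For the inductive step, the strategy is to ``strip off'' the outermost rotation and the outermost $W$-factor. Writing $p(x) = p_\ell x^\ell + \mathrm{l.o.t.}$ and $q(x) = q_{\ell-1} x^{\ell-1} + \mathrm{l.o.t.}$, the normalization condition (iii), compared at the $x^{2\ell}$ coefficient, forces $|p_\ell| = |q_{\ell-1}|$. When $q_{\ell-1} \neq 0$, I would choose $\varphi_\ell \in \mathbb{R}$ so that $e^{2i\varphi_\ell} = p_\ell/q_{\ell-1}$; the degenerate case $q_{\ell-1} = 0$ (which also forces $p_\ell = 0$) is handled by continuing the induction with an arbitrary phase. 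Defining $\tilde M(x) := M(x)\cdot e^{-i\varphi_\ell \sigma_Z}\cdot W(x)^{-1}$ where $M(x)$ is the matrix built from $(p, q)$, I would verify that $\tilde M(x)$ again has the QSP block form with polynomials $\tilde p, \tilde q$ of degrees at most $\ell - 1$ and $\ell - 2$, with parities shifted by one. Unitarity of $\tilde M(x)$ is automatic, so the induction hypothesis supplies $\varphi_0, \ldots, \varphi_{\ell - 1}$ with $V_{(\varphi_0,\ldots,\varphi_{\ell-1})}(x) = \tilde M(x)$; multiplying back by $W(x)\cdot e^{i\varphi_\ell \sigma_Z}$ then recovers $V_{(\varphi_0,\ldots,\varphi_\ell)}(x) = M(x)$.

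The main obstacle lies in the sufficiency direction, specifically in verifying that the stripped matrix $\tilde M(x)$ has QSP block form. The inverse $W(x)^{-1}$ contains $\sqrt{1-x^2}$ entries, so \emph{a priori} $\tilde M(x)$ could have irrational contributions on the diagonal and polynomial contributions off-diagonal, which would break the induction. Showing that the leading-order cancellation enforced by the choice of $\varphi_\ell$, together with the global normalization (iii), forces the exact regrouping required (polynomials on the diagonal, $\sqrt{1-x^2}$ times polynomials on the off-diagonal) and that the resulting degrees drop by the claimed amounts is the technical crux; this is where the parity constraints in (ii) play a decisive role, since they control which of the two possible sign choices in the $\sqrt{1-x^2}$ arithmetic actually appears.
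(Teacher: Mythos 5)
The paper offers no proof of this proposition: it is imported verbatim as \cite[Theorem 3]{gilyen2019qsvt}, so there is nothing internal to compare your argument against. Your plan is, however, essentially the standard proof from that reference (and from Low--Chuang before it): induction on $\ell$, with necessity following from expanding $V_{(\varphi_0,\dots,\varphi_{\ell-1})}W(x)e^{i\varphi_\ell\sigma_Z}$ and reading off $p=e^{i\varphi_\ell}\bigl(x\tilde p-(1-x^2)\tilde q\bigr)$, $q=e^{-i\varphi_\ell}\bigl(\tilde p+x\tilde q\bigr)$ together with $\det V_{\boldsymbol\varphi}=1$, and sufficiency by stripping $e^{-i\varphi_\ell\sigma_Z}W(x)^{-1}$ after choosing $e^{2i\varphi_\ell}=p_\ell/q_{\ell-1}$ (the identity $|p_\ell|=|q_{\ell-1}|$ coming from the $x^{2\ell}$ coefficient of (iii)). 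One clarification on your stated ``technical crux'': the block structure of $\tilde M(x)=M(x)e^{-i\varphi_\ell\sigma_Z}W(x)^{-1}$ is automatic, since the diagonal entries come out as $xpe^{-i\varphi_\ell}+(1-x^2)qe^{i\varphi_\ell}$ (a polynomial) and the off-diagonal as $i\sqrt{1-x^2}\bigl(xqe^{i\varphi_\ell}-pe^{-i\varphi_\ell}\bigr)$, with no irrational leakage to worry about; the only genuine content is the degree drop, where the choice of $\varphi_\ell$ kills the top coefficient and the parity hypotheses kill the next one, giving $\deg\tilde p\le\ell-1$ and $\deg\tilde q\le\ell-2$. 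With that reading, the plan is correct and complete in outline.
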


\begin{remark}
	A polynomial has parity $0$ if all coefficients corresponding to odd powers of $x$ are $0$. 
	Similarly, a polynomial has parity $1$ if all coefficients corresponding to even powers of $x$ are $0$. 
\end{remark}

Quantum Signal Processing (QSP) forms the foundational basis for the quantum circuit constructions presented in this work, owing to its intrinsic ability to implement Chebyshev polynomial transformations efficiently within quantum algorithms.

\begin{lemma}\label{lem:qsp-1d-chebyshev} 
	Let $T_k \in \mathbb{R}[x]$ be the degree-$k$ Chebyshev polynomial of the first kind. 
	Let further 
	$\boldsymbol \varphi \in \mathbb{R}^{k+1}$ such that $\varphi_i = 0$
	for all $i = 0, \cdots, k$.  
	For this specific choice $\boldsymbol \varphi$, 
	there exists an unparameterized quantum circuit, $U_k(x)$, 
	such that 
	\begin{equation}
		\bra{0} U_k(x) \ket{0}  = T_k(x)
	\end{equation}
	for each $k \in \mathbb N \cup \{0\}$. 
	The PQC $U_k$ has width $1$, depth $2k+1$, and $k+1$ (predetermined) phase angle parameters.
\end{lemma}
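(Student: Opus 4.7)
The plan is to specialize the Quantum Signal Processing theorem (Proposition~\ref{thm:qsp-complex-poly}) to the trivial phase sequence $\boldsymbol{\varphi} = \mathbf{0}\in\mathbb{R}^{k+1}$ and then read off the resulting $(0,0)$-entry via elementary trigonometry. Concretely, I would set $\ell = k$ in the ansatz \eqref{equation:qsp-ansatz} with all $\varphi_j = 0$. Since $e^{i\cdot 0 \cdot \sigma_Z}$ equals the $2\times 2$ identity, the product telescopes to
\begin{equation*}
V_{\mathbf{0}}(x) \;=\; W(x)^{k}.
\end{equation*}
This already defines the candidate circuit: set $U_k(x) := V_{\mathbf 0}(x) = W(x)^k$.

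Next, I use the exponential form $W(x) = e^{i\arccos(x)\sigma_X}$ stated just after \eqref{equation:encoding}. Setting $\theta := \arccos(x)$ (so $x=\cos\theta$ for $x\in[-1,1]$), the additivity of exponents for commuting Pauli factors gives $W(x)^k = e^{ik\theta\sigma_X}$. Expanding the $\sigma_X$-exponential in closed form yields
\begin{equation*}
W(x)^{k} \;=\; \cos(k\theta)\, I_2 \;+\; i\sin(k\theta)\,\sigma_X
\;=\;
\begin{pmatrix} \cos(k\theta) & i\sin(k\theta) \\ i\sin(k\theta) & \cos(k\theta) \end{pmatrix}.
\end{equation*}
Reading off the $(0,0)$-entry and invoking the classical defining identity $T_k(\cos\theta) = \cos(k\theta)$ for the first-kind Chebyshev polynomials then gives $\langle 0|U_k(x)|0\rangle = \cos(k\arccos(x)) = T_k(x)$, as required. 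I would note in passing that the $T_k$ appearing in this lemma refers to the classical normalization $\tilde T_k$ of Section~\ref{sec:FctNot}, not the $L^2_{\varrho^{(1)}}$-normalized version; converting between the two is a cosmetic prefactor that will be handled later when these QSP blocks are spliced into gPC surrogates.

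Finally, the complexity bookkeeping is straightforward and is really the only place to be careful. The circuit acts on one qubit, so the width is $1$. The ansatz \eqref{equation:qsp-ansatz} with $\ell=k$ consists of $k+1$ phase gates $e^{i\varphi_j\sigma_Z}$ interleaved with $k$ signal gates $W(x)$, giving a total of $2k+1$ elementary layers; even though every phase gate is the identity here, the $k+1$ phase angles $\varphi_0,\ldots,\varphi_k$ are retained as (predetermined) parameters of the ansatz, matching the depth $2k+1$ and parameter count $k+1$ claimed. There is no real obstacle beyond this accounting; the content of the lemma is essentially the observation that $W(x)^k$ implements the $k$-th Chebyshev polynomial through the Pauli-$X$ exponential, which is the simplest non-trivial instance of the QSP correspondence.
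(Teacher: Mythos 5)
Your proof is correct and is exactly the standard argument: the paper itself does not spell out a proof of this lemma but defers it to \cite[Lem.~3.1]{aftab2024korobov} (see Remark~\ref{rmk:TschQSVD}), and the argument given there is the same specialization of the QSP ansatz to zero phases, yielding $V_{\mathbf 0}(x)=W(x)^k=e^{ik\arccos(x)\sigma_X}$ with top-left entry $\cos(k\arccos x)$. Your side remark is also well taken: with all-zero phases the circuit encodes the \emph{classical} polynomial $\tilde T_k$ (which is what condition (iii) of Proposition~\ref{thm:qsp-complex-poly} forces, since $|T_k(1)|^2=2/\pi\neq 1$ for the $L^2_{\varrho^{(1)}}$-normalized version), so the symbol $T_k$ in the lemma must be read in the classical normalization of Sec.~\ref{sec:FctNot}, with the constant $(2/\pi)^{1/2}$ absorbed into the LCU coefficients downstream.
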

\begin{remark}\label{rmk:TschQSVD}
	Reference~\cite{gilyen2019qsvt} establishes a result closely related to Lemma~\cref{lem:qsp-1d-chebyshev}, albeit with minor differences. These discrepancies stem from the use of a slightly different quantum signal processing circuit configuration than the one adopted in our work. A proof of Lemma~\cref{lem:qsp-1d-chebyshev} that aligns with our conventions can be found, for example, in~\cite[Lem. 3.1]{aftab2024korobov}.
\end{remark}
\begin{remark}\label{rmk:InfQSP}
	The vector of phase angles in the QSP representation is, in general, 
	not unique, as discussed in the univariate case in \cite{Dong2024infinitequantum}.
	As shown in \cite[Thm.~3]{Dong2024infinitequantum}, 
	under additional conditions a unique selection, 
	dubbed ``reduced phase factors'', 
	is shown to exist, and to admit efficient, stable numerical approximation 
	by a fixed point iteration.
	Furthermore, 
	\cite[Thm.~4]{Dong2024infinitequantum} shows that the
	entries of these reduced phase angle vectors decay, 
	as the polynomial degree of the Chebyshev-polynomial tends to infinity,
	\emph{in-step with the Chebyshev-coefficients of the expansion in Chebyshev-polynomials}.
	A tensorization argument could imply a corresponding result
	also for the presently considered, multivariate Chebyshev-expansions. 
	The multivariate arrays of reduced phase-angles corresponding to Chebyshev-gPC expansions
	should then inherit the summability properties of the Chebyshev-gPC coefficient sequences.
	Sparsity of the reduced phase-angle arrays can be expected if 
	the gPC coefficient sequences are.
\end{remark}

\subsubsection{Linear Combination of Unitaries}\label{lcu-sec}
The Linear Combination of Unitaries (LCU) method, originally proposed in~\cite{Childs2012lcu}, offers a systematic approach for implementing linear combinations of unitary operators within a quantum computational framework. Since its introduction, the LCU technique has become a fundamental component in the design of a wide array of quantum algorithms. Specifically, given integers \( k, T \in \mathbb{N} \), real coefficients \( a_1, \ldots, a_T \in \mathbb{R} \), and unitary operators \( U_1, \ldots, U_T \), the LCU method enables the realization of the operator
\[
U = \sum_{j=1}^{T} a_j U_j.
\]
This construction relies on the capability to implement two specific unitary operators, which are essential to the execution of the LCU algorithm. First, the algorithm assumes that the state preparation oracles $F_l$ and $F_r$ can be implemented:    
	\begin{align}\label{F-LCU}
		F\ket{0} = \frac{1}{\sqrt{\| \boldsymbol{a} \|_1 }} \sum_{j=1}^{T} \sqrt{a_{j}} \ket{j}.
	\end{align} Second, it assumes that the following two-qubit controlled gate can be implemented:
	\begin{equation}\label{controlled-LCU}
		U_c = \sum_{j=1}^{T} U_j \otimes \ket{j}\bra{j}
	\end{equation} 
Given access to the necessary unitary operators, the LCU algorithm enables the implementation of the target unitary operator \( W_{\operatorname{LCU}} = (I \otimes F^{\dagger}) U_c (I \otimes F) \). Let \( \ket{\psi} \) represent the input quantum state, and let \( \ket{0} \) be an ancilla qubit. The application of \( W_{\operatorname{LCU}} \) to the quantum state is described by the following expression:
\[
W_{\operatorname{LCU}} \ket{\psi} \ket{0} = \frac{1}{\|\boldsymbol{a}\|_1} \left( \sum_{j=1}^{T} a_j U_j \right) \ket{\psi} \ket{0} + \ket{\perp},
\]
where \( \ket{\perp} \) denotes a potentially non-normalized state that satisfies the condition \( (I \otimes \ket{0}\bra{0}) \ket{\perp} = 0 \). Our primary objective is to compute the expected value \( \bra{\psi} W_{\operatorname{LCU}} \ket{\psi} \) for a given quantum state \( \ket{\psi} \). This expectation can be efficiently calculated using the Hadamard test \cite{1998clevequantumalgorithmsrevisited}. The test takes the quantum state \( \ket{\psi} \) and the operator \( W_{\operatorname{LCU}} \) as input, and produces a random variable from the measurement of the ancilla qubit, from which the desired expected value is derived. 
	Finally, we have the following expression for the output of the computation:
\begin{equation}\label{had-prob}
\bra{0} \bra{\psi} W_{\operatorname{LCU}} \ket{\psi} \ket{0} = \frac{1}{\|\boldsymbol{a}\|_1} \bra{\psi} \left( \sum_{j=1}^{T} a_j U_j \right) \ket{\psi}.
\end{equation}	
	
\begin{remark}We note that \cref{had-prob} implies that the success probability decays as \( \mathcal O ( 1 / \| \boldsymbol{a} \|_1^2 ) \). In particular, if \( \| \boldsymbol{a} \|_1 = \Omega(c^n) \) for some $c > 1$, then the success probability decays exponentially. In the worst case, to prevent this exponential decay, it is necessary to employ robust oblivious amplitude amplification \cite{childs2015lcutaylor} to boost the success probability to \( 1 - \mathcal O (\delta) \) for any \( \delta > 0 \). To achieve this improvement, we must run \( \mathcal O ( \|\boldsymbol{a}\|_1/\delta) \) rounds of robust oblivious amplitude amplification. However, we will not further address this detailed aspect, as the focus of this work is on the construction of the quantum circuit itself, rather than other concerns such as quantum state readout.\end{remark}

	\section{Quantum Circuits for gPC Approximation}
	\label{sec:QAprgPC}
	Using \cref{thm:ExpFinDim} and \cref{thm:AlgInfDim}, we derive quantum circuit representations for the \( n \)-term truncated Taylor (\cref{thm:ExpFinDim}) and Chebyshev-polynomial expansions. With the \( n \)-term generalized polynomial chaos (gPC) approximations for the narrow class of \emph{multivariate, holomorphic functions} of Taylor- or Chebyshev-type at hand, 
	we propose constructing PQCs by leveraging the QSP and LCU algorithms.
	
	\subsection{PQC Emulation of Tensorized Taylor polynomials}
	\label{sec:PQCTaygPC}
	For the PQC emulations of $n$-term truncated Taylor-gPC expansions 
	\begin{equation}\label{eq:TaygPC3}
		f^{\operatorname{Tay}}_{\Lambda_n}(\bsy) 
		:= 
		\sum_{\bsnu \in \Lambda_n} t_\bsnu \bsy^\bsnu \;, \quad \bsy \in \cU \;,
		\quad t_\bsnu := \frac{1}{\bsnu!} (\partial^\bsnu_\bsy u)(0) \,
	\end{equation}
	of the (analytic in $[-1,1]^\IN$) 
	in \cref{eq:TaygPC1} we leverage the approach in \cite{yu2023provable}. There, unitaries representing
	(scaled) multivariate monomials $t_\bsnu \bsy^\bsnu$ in \cref{eq:TaygPC3}
	are developed.
	The corresponding PQC's were 
	analyzed in detail in \cite[Lem.~S5]{yu2023provable}.
	We re-state the result for convenience of the reader.
	%
	\begin{lemma}\label{lem:TayQC}
		\cite[Lem.~S5]{yu2023provable}
		Given the monomial $t_\bsnu\bsy^\bsnu$, $\bsy\in [-1,1]^d$, 
		with the scaling $|t_\bsnu\bsy^\bsnu|\leq 1$, 
		and with total degree $|\bsnu|_1 \leq s \in \IN$,
		there exists a PQC $U^\bsnu(\bsy)$ such that 
		\begin{equation}\label{eq:MonPQC}
			\langle + |^{\otimes d} U^\bsnu(\bsy) | + \rangle^{\otimes d} = t_\bsnu \bsy^\bsnu
			\qquad \forall \bsy \in [-1,1]^d\;.
			\;
		\end{equation}
		Furthermore, the PQC $U^\bsnu$ is such that:
		\begin{align*}
			{\rm width}(U^\bsnu) &\leq |\bsnu|_0 \leq d, \\
			{\rm depth}(U^\bsnu) &\leq 2|\bsnu|_0 + 1 \leq 2s+1, \\
			{\rm size}(U^\bsnu) &\leq |\bsnu|_1 + |\bsnu|_0 \leq s+d.
		\end{align*}
		Here, ``${\rm size}$'' of the PQC 
		refers to the number of parameters required to specify the PQC.
	\end{lemma}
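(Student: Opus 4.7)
The strategy is to build $U^\bsnu(\bsy)$ as a tensor product, over the active coordinates $j\in\supp(\bsnu)$, of single-qubit PQCs, each encoding a suitably scaled univariate factor of the monomial via quantum signal processing. First, I distribute the scalar $t_\bsnu$ across the active coordinates: since $|t_\bsnu\bsy^\bsnu|\le 1$ on $\cU$ in particular forces $|t_\bsnu|\le 1$, I choose real numbers $(c_j)_{j\in\supp(\bsnu)}$ with $\prod_j c_j = t_\bsnu$ and $|c_j|\le 1$ (for instance by extracting an $|t_\bsnu|^{1/|\bsnu|_0}$-root and placing the sign on a single factor). Setting $p_j(y) := c_j y^{\nu_j}$, I have $|p_j(y)|\le 1$ on $[-1,1]$ and $\prod_{j\in\supp(\bsnu)} p_j(y_j)=t_\bsnu\bsy^\bsnu$.

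Next, for each $j\in\supp(\bsnu)$, I invoke \cref{thm:qsp-complex-poly} with QSP length $\ell=\nu_j$ to obtain a single-qubit unitary $V_{\boldsymbol\varphi^{(j)}}(y_j)$ whose $(0,0)$-entry equals $p_j(y_j)$, using $\nu_j+1$ phase angles. The parity condition is met since $p_j(y)=c_j y^{\nu_j}$ has parity $\nu_j\bmod 2$, and the normalization condition (iii) is achievable because $|p_j|\le 1$ on $[-1,1]$. Conjugating by Hadamards sends $|0\rangle\mapsto|+\rangle$ and produces $V_j(y_j)$ with $\langle +|V_j(y_j)|+\rangle = p_j(y_j)$. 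Tensorizing over the support, I set $U^\bsnu(\bsy) := \bigotimes_{j\in\supp(\bsnu)} V_j(y_j)$, acting on $|\bsnu|_0$ qubits (inactive coordinates with $\nu_j=0$ contribute the trivial factor $y_j^0=1$ and need no qubit). The factorization of the product state then gives
\[
\langle + |^{\otimes|\bsnu|_0} U^\bsnu(\bsy) | + \rangle^{\otimes|\bsnu|_0} = \prod_{j\in\supp(\bsnu)} \langle + | V_j(y_j) | +\rangle = \prod_{j} p_j(y_j) = t_\bsnu\bsy^\bsnu,
\]
establishing \eqref{eq:MonPQC}. The complexity bounds follow by inspection: the width equals $|\bsnu|_0\le d$, the parallel depth is dominated by the longest QSP block plus the fixed Hadamard layers, and the total parameter count is $\sum_{j\in\supp(\bsnu)}(\nu_j+1)=|\bsnu|_1+|\bsnu|_0\le s+d$.

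The principal technical obstacle is the existence direction of \cref{thm:qsp-complex-poly}: that the prescribed scalar polynomial $p_j$ can actually be realized as the $(0,0)$-entry of the QSP ansatz of length $\nu_j$. This reduces to a complementation lemma — for every real $p$ of degree $\nu_j$ with parity $\nu_j\bmod 2$ and $|p|\le 1$ on $[-1,1]$, there exists $q\in\C[y]$ of degree $\le\nu_j-1$ and opposite parity satisfying $|p(y)|^2+(1-y^2)|q(y)|^2=1$. This is the classical Fejér–Riesz-type completion result carried out in the QSP literature, see \cite{gilyen2019qsvt}. Once this existence is granted, the tensorization over $\supp(\bsnu)$, the Hadamard basis change between $|0\rangle$ and $|+\rangle$, and the counting of qubits, gates, and phase parameters are all routine bookkeeping.
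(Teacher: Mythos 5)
The paper itself does not prove this lemma --- it is quoted verbatim from \cite[Lem.~S5]{yu2023provable} --- so your argument must stand on its own, and on those terms it has a genuine gap at its central step. You invoke Prop.~\ref{thm:qsp-complex-poly} to realize $p_j(y)=c_j y^{\nu_j}$ with $|c_j|\le 1$ as the $(0,0)$-entry of a QSP sequence, asserting that condition (iii) ``is achievable because $|p_j|\le 1$ on $[-1,1]$.'' But condition (iii) reads $|p(x)|^2+(1-x^2)|q(x)|^2=1$ for \emph{all} $x\in[-1,1]$; evaluating at $x=\pm1$ forces $|p(\pm1)|=1$. Hence whenever $|c_j|<1$ --- which must occur for at least one $j$ unless $|t_\bsnu|=1$, since $\prod_j c_j=t_\bsnu$ and $|c_j|\le 1$ --- the polynomial $c_j y^{\nu_j}$ is \emph{not} realizable as the $(0,0)$-entry, and the ``only if'' direction of Prop.~\ref{thm:qsp-complex-poly} shows that no choice of phase angles can help. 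The complementation lemma you state at the end (every real $p$ of the right degree and parity with $|p|\le 1$ on $[-1,1]$ admits a $q$ with $|p|^2+(1-y^2)|q|^2=1$) is false as stated: $p(y)=y/2$ is already a counterexample. The Fej\'er--Riesz completion used in \cite{gilyen2019qsvt} applies only when $1-|p|^2$ vanishes at $\pm 1$.

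The standard repair --- and the reason the lemma is phrased with $\bra{+}\cdots\ket{+}$ rather than $\bra{0}\cdots\ket{0}$ --- is the \emph{real} QSP statement (\cite[Thm.~5, Cor.~10]{gilyen2019qsvt}): for real $f$ with $|f|\le 1$, degree $\le\ell$ and parity $\ell\bmod 2$, there exist $P,Q$ satisfying (i)--(iii) with $\operatorname{Re}P=f$; the imaginary part of $P$ absorbs the deficit $1-f(\pm1)^2$ at the endpoints. One then uses $\bra{+}V_{\boldsymbol{\varphi}}(x)\ket{+}=\operatorname{Re}P(x)+i\sqrt{1-x^2}\,\operatorname{Re}Q(x)$ and must additionally arrange that the $\operatorname{Re}Q$ term vanishes (e.g.\ via symmetric phase factors, or by implementing $\tfrac12(V_{\boldsymbol{\varphi}}+V_{\boldsymbol{\varphi}}^\dagger)$ with one ancilla) --- a step your write-up omits entirely. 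Separately, your depth accounting (``dominated by the longest QSP block'') yields $\max_j(2\nu_j+1)+O(1)=2|\bsnu|_\infty+O(1)$, not the stated $2|\bsnu|_0+1$; both are bounded by $2s+1$, but you should not claim the latter form from your construction. The tensorization over $\supp(\bsnu)$, the distribution of $t_\bsnu$ over the factors, and the width and parameter counts are otherwise sound.
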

	With Lem.~\cref{lem:TayQC} in hand, 
	we emulate the $n$-term truncated Taylor-gPC expansion \cref{eq:TaygPC3} by a PQC constructed 
	via a suitable LCU.
	The following result provides PQCs for total degree $s$ 
	polynomials in dimension $d$ \cite[App.~B, Thm.~1]{yu2023provable}.
	\begin{proposition}\label{prop:TgPCQC}
		For a multivariate Taylor polynomial 
		$p^{\operatorname{Tay}}:[-1,1]^d\to \IR: \bsy \mapsto \sum_{|\bsnu|_1 \leq s} t_\bsnu \bsy^\bsnu$ 
		of total degree $s\in \IN $ such that 
		$\max_{\bsy\in [-1,1]^d} | p(\bsy) | \leq 1$,
		exists a PQC $W_p(\bsy)$ such that 
		\begin{equation}\label{eq:PQCTaygPC}
			p^{\operatorname{Tay}}(\bsy) = \langle 0 | W^\dagger_p(\bsy) Z^{(0)} W_p(\bsy) | 0 \rangle \;,\quad \bsy \in [-1,1]^d \;.
		\end{equation}
		Here, $Z^{(0)}$ is the Pauli observable of the first qubit. 
		The PQC $W_p$ satisfies
		\begin{align*}
			\mathrm{width}(W_p) &= \mathcal O (d + \log(s) + s\log(d)), \\
			\mathrm{depth}(W_p) &= \mathcal O (s^2 d^s (\log(s) + s\log(d))), \\
			\mathrm{size}(W_p) &= \mathcal O (s d^s (s + d)).
		\end{align*}
		Here, the constants hidden in $\mathcal O (\cdot)$ are independent of $d$.
	\end{proposition}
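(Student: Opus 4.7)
The plan is to combine the single-monomial encoder of Lemma~\ref{lem:TayQC} with the LCU construction of Section~\ref{lcu-sec} and a Hadamard-test ancilla to cast $p^{\operatorname{Tay}}(\bsy)$ as the expectation $\langle 0 | W_p^\dagger(\bsy) Z^{(0)} W_p(\bsy) | 0 \rangle$. The starting observation is that the number of admissible multi-indices $\bsnu \in \IN_0^d$ with $|\bsnu|_1 \leq s$ equals $T := \binom{d+s}{s} = O(d^s)$, which accounts for the factor $d^s$ appearing in all three complexity bounds.

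First, I would fix an enumeration $\bsnu_1,\ldots,\bsnu_T$ of these multi-indices and apply Lemma~\ref{lem:TayQC} to each $\bsnu_j$. This yields PQCs $U^{\bsnu_j}(\bsy)$, each of width at most $d$, depth at most $2s+1$, and size at most $s+d$, satisfying $\langle +|^{\otimes d} U^{\bsnu_j}(\bsy) |+\rangle^{\otimes d} = t_{\bsnu_j}\bsy^{\bsnu_j}$, with signs of $t_{\bsnu_j}$ absorbed into the unitary. Next, I would invoke the LCU construction on a selector register of $\lceil \log_2 T \rceil = O(s\log d)$ ancilla qubits, building the state-preparation oracle $F$ as in \eqref{F-LCU} and the controlled multiplexer $U_c = \sum_j U^{\bsnu_j}(\bsy)\otimes |j\rangle\langle j|$ as in \eqref{controlled-LCU}. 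By \eqref{had-prob}, the composition $W_{\operatorname{LCU}}(\bsy) = (I\otimes F^\dagger) U_c (I\otimes F)$ satisfies $\langle +|^{\otimes d}\langle 0| W_{\operatorname{LCU}}(\bsy) |+\rangle^{\otimes d}|0\rangle = T^{-1}\, p^{\operatorname{Tay}}(\bsy)$, i.e.\ the Taylor polynomial up to the LCU subnormalization.

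To convert this into the exact form \eqref{eq:PQCTaygPC}, I would attach one further ancilla, embed $W_{\operatorname{LCU}}$ inside a standard Hadamard-test template (a controlled $W_{\operatorname{LCU}}$ sandwiched between Hadamards on that ancilla), and absorb the preparation of $|+\rangle^{\otimes d}$ on the input register into $W_p$ via a layer of Hadamards, so that the first-qubit Pauli-$Z$ expectation reproduces the real part of the amplitude. The complexity tally then reads off as follows: the width sums $O(d)$ from the input register, $O(s\log d)$ from the selector, $O(\log s)$ from QSP workspaces inside Lemma~\ref{lem:TayQC}, and $O(1)$ from the Hadamard-test ancilla, giving $O(d+\log s + s\log d)$; the depth stacks $T$ sequential controlled monomial circuits of depth $O(s)$, together with controls multiplexed on the $O(s\log d)$-qubit selector and the state-preparation $F$, yielding $O(s^2 d^s(\log s + s\log d))$; and summing $O(s+d)$ parameters per monomial over $T$ monomials gives $O(s d^s(s+d))$.

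The principal obstacle is the conversion of the subnormalized LCU output \eqref{had-prob} into the exact equality \eqref{eq:PQCTaygPC}, since \eqref{had-prob} delivers the target polynomial only up to the factor $\|\boldsymbol{a}\|_1^{-1}$. Resolving this requires either carefully renormalizing the LCU weights (and checking that the rescaled component unitaries remain within the scope of Lemma~\ref{lem:TayQC}) or invoking robust oblivious amplitude amplification as noted in the remark following \eqref{had-prob} to lift the amplitude to unity without disturbing the asymptotic width, depth, and size orders. Once this bookkeeping is settled, the rest of the proof is a routine summation of the bounds of Lemma~\ref{lem:TayQC} with the standard cost of state preparation on $O(\log T)$ qubits.
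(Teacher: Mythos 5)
The paper offers no proof of this proposition: it is imported verbatim from \cite[App.~B, Thm.~1]{yu2023provable}, and the closest in-paper analogue is the proof of Theorem~\ref{thm:TaypQC}, which deploys exactly the strategy you propose — monomial encoders from Lemma~\ref{lem:TayQC}, an LCU over the $O(d^s)$ admissible multi-indices on an $O(s\log d)$-qubit selector register, multi-controlled gates decomposed at linear depth via \cite{daSilva2022LinearDepthMultiQubit}, and a final Hadamard test — so your reconstruction is correct and takes essentially the same route. Your explicit flagging of the $\|\boldsymbol{a}\|_1$ subnormalization in \eqref{had-prob} as the step needing care is, if anything, more scrupulous than the paper, which acknowledges the issue only in the remark following \eqref{had-prob} and sets it aside.
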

	%
	
	We use Lem.~\cref{lem:TayQC} and Prop.~\cref{prop:TgPCQC} 
	to emulate the Taylor-gPC expansion \cref{eq:TaygPC3} with a PQC. 
	\begin{theorem}\label{thm:TaypQC}
		Let $u:\cU\to \IR$ be $(\bsb,\epsilon)$-holomorphic 
		for some sequence $\bsb \in (0,\infty)^\IN \cap \ell^p(\IN)$ for some $0<p\leq 1$. 
		There exists a constant $C>0$ and a sequence $\{ \Lambda_n \}_{n\geq 1}$
		of nested, d.c. index sets $\Lambda_n\subseteq \cF$ such that for all $n\in \IN$  such that $|\Lambda_n| \leq  n$ ,
		there exists a PQC $W_{n,\Lambda_n}(\bsy)$ 
		such that 
		\begin{equation*}
			| u - \bra{0} W_{n,\Lambda_n}^\dagger Z^{(0)} W_{n,\Lambda_n} \ket{0} | \leq C n^{-(1/p-1)} \;,
		\end{equation*}
		Moreover, the PQC $W_{n,\Lambda_n}$ is such that 
		\begin{align*}
			\mathrm{width}(W_{n,\Lambda_n}) &= \mathcal O (1 + 2 \log(n)), \\
			\mathrm{depth}(W_{n,\Lambda_n}) &= \mathcal O (1 + n \log^2(n)).
		\end{align*}
	\end{theorem}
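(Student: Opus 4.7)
The plan is to combine three ingredients already established in the paper: the best $n$-term Taylor-gPC truncation from Proposition~\ref{prop:TaygPC}, the single-monomial PQC encoding from Lemma~\ref{lem:TayQC}, and the LCU construction of Sec.~\ref{lcu-sec} (specialized to the form \eqref{eq:PQCTaygPC}). First, I would invoke Proposition~\ref{prop:TaygPC} to obtain a nested family of anchored, d.c.\ index sets $\Lambda_n \subseteq \cF$ with $|\Lambda_n|\le n$, such that the truncated Taylor-gPC polynomial
\begin{equation*}
 f^{\operatorname{Tay}}_{\Lambda_n}(\bsy) := \sum_{\bsnu\in\Lambda_n} t_\bsnu \bsy^\bsnu
\end{equation*}
satisfies $\sup_{\bsy\in\cU}|u(\bsy) - f^{\operatorname{Tay}}_{\Lambda_n}(\bsy)| \le Cn^{-(1/p-1)}$ together with the key logarithmic degree bound $s_n:=\max_{\bsnu\in\Lambda_n}|\bsnu|_1 \le C(1+\log n)$. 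Since the sets are anchored, we also have $|\bsnu|_0 \le s_n = O(\log n)$, and by reindexing we may view every $\bsy^\bsnu$ as a function of the $O(\log n)$ active coordinates only, so that effectively $d = O(\log n)$ for the monomials we need to encode.

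Next, for each $\bsnu\in\Lambda_n$, I would apply Lemma~\ref{lem:TayQC} to build a PQC $U^\bsnu(\bsy)$ realizing $t_\bsnu \bsy^\bsnu$ in the sense of \eqref{eq:MonPQC}, after a common uniform rescaling $t_\bsnu \mapsto t_\bsnu/M$ with $M$ an upper bound on $\sum_{\bsnu\in\Lambda_n}|t_\bsnu|$; such $M$ is bounded by $\|(t_\bsnu)\|_{\ell^1(\cF)}<\infty$, which is finite since $(t_\bsnu)\in\ell^p$ with $p\le 1$ (the $\ell^1$-sum is actually bounded by the RHS of \eqref{eq:TaygPC1} for $n=1$). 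Each $U^\bsnu$ has width at most $|\bsnu|_0 = O(\log n)$ and depth at most $2|\bsnu|_0+1 = O(\log n)$. Then I would apply the LCU construction with coefficient vector $\bsa=(|t_\bsnu|)_{\bsnu\in\Lambda_n}\in\mathbb{R}^{|\Lambda_n|}$ (signs being absorbed into the $U^\bsnu$): the state preparation oracle $F$ acts on a $\lceil\log_2 n\rceil$-qubit index register and the SELECT oracle $U_c=\sum_{\bsnu\in\Lambda_n} U^\bsnu\otimes |\bsnu\rangle\langle\bsnu|$ combines all monomial circuits into a single unitary $W_{n,\Lambda_n}:=(I\otimes F^\dagger)U_c(I\otimes F)$. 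Composing with a Hadamard-test trick (or equivalently a one-qubit block encoding and the identity $\langle+|(\cdot)|+\rangle = \tfrac12\langle 0|(\cdot)|0\rangle + \cdots$) yields, after rescaling, an estimator of the form $\langle 0|W_{n,\Lambda_n}^\dagger Z^{(0)} W_{n,\Lambda_n}|0\rangle$ that equals $f^{\operatorname{Tay}}_{\Lambda_n}(\bsy)$ on $\cU$, so the error bound $Cn^{-(1/p-1)}$ carries over.

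For the complexity bookkeeping, the width is the sum of (a) the $O(\log n)$ qubits used by any single $U^\bsnu$ (these can be reused by all monomials with the standard multi-controlled SELECT implementation on a common register), and (b) the $\lceil\log_2 n\rceil = O(\log n)$ ancillas of the LCU index register, giving total width $O(1+2\log n)$. For the depth, a sequential SELECT realization concatenates the $n$ monomial circuits, each conditioned on the $O(\log n)$-qubit index register; implementing each such multi-controlled $U^\bsnu$ contributes an $O(\log n)$ multiplicative overhead over its native depth $O(\log n)$, and summing over the $n$ monomials yields $O(n\log^2 n)$; the $F$, $F^\dagger$, and Hadamard-test layers add only $O(\log n)$ depth, which is absorbed. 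The main obstacle I expect is the careful depth accounting in the SELECT stage, where the control overhead per monomial and the simultaneous reuse of the working register across all $\bsnu\in\Lambda_n$ must be arranged so as not to inflate either width or depth beyond the advertised bounds; this is precisely where the anchored, logarithmic-degree structure of $\Lambda_n$ from \eqref{eq:TaygPC2} is indispensable, since without $s_n = O(\log n)$ the per-monomial depth would dominate and spoil the $O(n\log^2 n)$ bound.
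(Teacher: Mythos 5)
Your proposal follows essentially the same route as the paper's proof: Proposition~\ref{prop:TaygPC} for the $n$-term truncation with the logarithmic bounds on $|\bsnu|_0$ and $|\bsnu|_1$, Lemma~\ref{lem:TayQC} for the per-monomial PQCs, an LCU with an $O(\log n)$-qubit index register plus a Hadamard test, and the same depth accounting $O(n\cdot s\cdot\log n)=O(n\log^2 n)$ with width $O(d+\log n)=O(1+2\log n)$. The only cosmetic difference is that you place $|t_\bsnu|$ in the LCU weights (with an explicit $\ell^1$ rescaling) whereas the paper absorbs the coefficients into the unitaries and uses uniform weights $1/n$; this is an equivalent, if anything slightly more careful, arrangement of the same argument.
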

	\begin{proof}
		Fix $n\in \IN$, and $\Lambda_n \subseteq \cF$ as in \cref{eq:TaygPC3}.
		From Prop.~\cref{prop:TaygPC}, 
		there exists a constant $C>0$ (independent of $n$)
		such that 
		for all $\bsnu\in \Lambda_n$ holds:
		$$
		\max\{|\bsnu |_0, |\bsnu|_\infty \} \leq |\bsnu|_1 \leq C(1+\log(n)) \;.
		$$
		In addition, 
		the sets $\Lambda_n$ are anchored. Hence, 
		\begin{equation}\label{eq:ds}
			d = |\bsnu|_0 \leq C(1+\log(n))\;,\;\;\mbox{and} \;\;
			s=|\bsnu|_1 \leq C(1+\log(n)).
		\end{equation}
		We fix this meaning of $d$ and $s$ through this proof. Choose next $\bsnu\in \Lambda_n$ arbitrary. 
		Lem.~\cref{lem:TayQC} implies that for every $\bsnu\in \Lambda_n$ 
		exists a PQC $U^\bsnu$ such that \cref{eq:MonPQC} holds. From \cref{eq:TaygPC3}, 
		the PQC $W^{\operatorname{Tay}}_n$ emulating the $n$-term truncated Taylor
		expansion $[-1,1]^\IN \ni \bsy \mapsto f^{\operatorname{Tay}}_{\Lambda_n}(\bsy)$ 
		in \cref{eq:TaygPC3} is obtained by summing the PQCs $U^\bsnu$ 
		over all $\bsnu\in \Lambda_n$ via LCU \cite{Childs2012lcu}.
		Similar as in the proof of \cite[App.~B, Thm.~1]{yu2023provable}. Let $n\in \IN$ and $\{ \bsnu(j) \}_{j=1}^n$ be an enumeration of $\Lambda_n$.
		The normalized target operator then reads
		$$
		U_{\Lambda_n}(\bsy) = \sum_{j=1}^n \frac{1}{n} U^{\bsnu(j)}(\bsy) 
		\;.
		$$
		As sums of unitaries $U^{\bsnu(j)}$ are not necessarily unitary,
		to implement $U_{\Lambda_n} (\bsnu)$ via LCUs, one first constructs 
		a unitary $F$ such that 
		$$
		F\ket{0} = \sum_{j=1}^n \ket{j} \;.
		$$
		This $F$ could be e.g. realized by Hadamard gates. With $F$, we build the controlled unitary 
		\begin{equation}\label{eq:CnUntry}
			U_{c,\Lambda_n}(\bsy) 
			:= 
			\sum_{j=1}^n  \ket{j}\bra{j} \otimes U^{\bsnu(j)}(\bsy) \;.
		\end{equation}
		We observe that each $\ket{j}\bra{j} \otimes U^{\bsnu(j)}$ 
		could be constructed
		with $\mathcal O (\log(n))$-qubit controlled Pauli rotation gates, as 
		$U^{\bsnu(j)}(\bsy)$ consists of single-qubit Pauli rotation gates. The $\mathcal O (\log(n))$-qubit controlled gates can be decomposed into
		PQCs of CNOT gates and single-qubit rotation gates resulting in $\mathcal O (\log(n))$ 
		PQC depth, without any use of ancilla qubit, 
		as explained in \cite{daSilva2022LinearDepthMultiQubit}. The resulting unitary 
		$$
		W_{\operatorname{LCU},\Lambda_n} = (F^\dagger \otimes I) U_{c,\Lambda_n} (F\otimes I) 
		$$
		has the property 
		$$
		W_{\operatorname{LCU},\Lambda_n}\ket{0} \ket{+}^{\otimes d}
		=
		\ket{0} U_{\Lambda_n}(\bsy) \ket{+}^{\otimes d} +\ket{\perp}
		\;.
		$$
		Here, 
		$(\bra{0} \otimes I ) \ket{\perp} = 0$. 
		We observe 
		$$
		\bra{0} \bra{+}^{\otimes d} \ket{0}\ket{+}^{\otimes d} 
		=
		\bra{+}^{\otimes d} U_{\Lambda_n}(\bsy) \ket{+}^{\otimes d} 
		=
		f^{\operatorname{Tay}}_{\Lambda_n}(\bsy)\;, 
		\quad 
		\bsy \in [-1,1]^d \;.
		$$
		Hence,
		to numerically evaluate $f^{\operatorname{Tay}}_{\Lambda_n}(\bsy)$, 
		the 
		PQC $\bra{0} \bra{+}^{\otimes d} \ket{0}\ket{+}^{\otimes d}$
		could be estimated with the Hadamard test. We apply the Hadamard test to the $W_{\operatorname{LCU},\Lambda_n}$ which
		results in the PQC $W_{n,\Lambda_n}$ according to Fig.~\cref{figure:lcu-plus-hadamard}.
		\begin{figure}[h]
            \centering
            \sbox0{
            \begin{quantikz}[wire types={b,b},classical gap=0.07cm]
                \gategroup[wires=2,steps=6,style={rounded corners,draw=none,fill=red!40}, background]{$\text{U}_{\operatorname{LCU}} $}
                \lstick{$\ket{0} \; \; $} &  &  & \gate[2]{U_c} &  & \\
                \lstick{$\ket{0} \; \; $} &   & \gate{F} &  & \gate{F^\dagger} &  
            \end{quantikz}
                }%
            \sbox1{
            \begin{quantikz}[wire types={q,b,b},classical gap=0.07cm]
                \gategroup[wires=3,steps=6,style={rounded corners,draw=none,fill=red!40}, background]{$\text{Hadamard Test}$}
                \lstick{$\ket{0} \; \; $} & \gate{H}   &  \ctrl{1}  &  \gate{H}  & \meter{}   & \\
                \lstick{$\ket{0} \; \; $} &  & \gate[2]{U_c}  & &  & \\
                \lstick{$\ket{0} \; \; $} & \gate{F}  &  & \gate{F^\dagger} & &  
            \end{quantikz}
            }%
            \begin{tabular}{cc}
            \usebox0 & \usebox1
            \end{tabular}
            \caption{(Left) The quantum circuit implementing the linear combination of unitaries technique.  (Right) 
            The quantum circuit implementing the Hadamard test.
            }
            \label{figure:lcu-plus-hadamard}
            \end{figure}
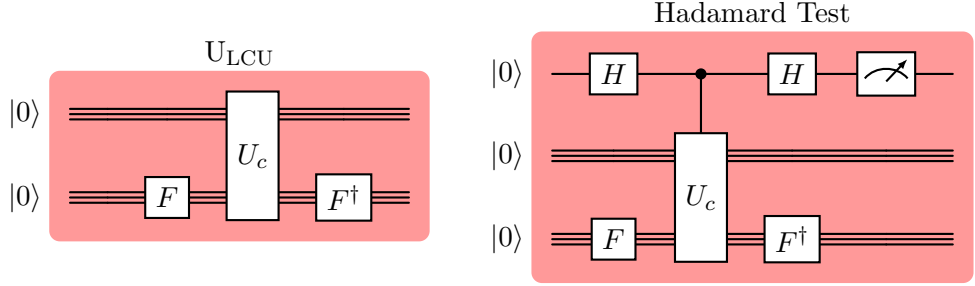

		Measuring the first qubit of $W_{n,\Lambda_n}$ gives
		$$
		f_{W_{n,\Lambda_n}} (\bsy) 
		:=
		\bra{0} W_{n,\Lambda_n}^\dagger(\bsy) Z^{(0)} W_{n,\Lambda_n}(\bsy)\ket{0} 
		=
		\bra{0} \bra{+}^{\otimes d} W_{\operatorname{LCU},\Lambda_n} \ket{0}\ket{+}^{\otimes d} 
		=
		f_{\Lambda_n}(\bsy) 
		\;.
		$$
		The controlled unitary $U_{c,\Lambda_n}(\bsy)$ in the LCU construction  \cref{eq:CnUntry}
		can be implemented by $\mathcal O (ns)$ many $\mathcal O (\log(n))$-qubit controlled gates. An arbitrary $\mathcal O (\log(n))$-qubit controlled gate can be implemented by a PQC 
		made of CNOT gates and single-qubit gates with 
		depth $\mathcal O (\log(n))$ \cite{daSilva2022LinearDepthMultiQubit}. We conclude that $U_{c,\Lambda_n}(\bsy)$ can be implemented by a PQC 
		of depth $\mathcal O (sn\log(n))$ and width $\mathcal O (d+\log(n))$. This gives that depth and width of 
		$W_{\operatorname{LCU},\Lambda_n} = (F^\dagger\otimes I) U_{c,\Lambda_n} (F\otimes I)$
		scale as those of $U_{c,\Lambda_n}$, as $F$ is a tensor product of Hadamard gates. The overall depth of the PQC $W_{n,\Lambda_n}$ is then $\mathcal O (sn\log(n)+d)$, 
		and the overall width of the PQC $W_{n,\Lambda_n}$ is $\mathcal O (d+\log(n))$.
		Inserting \cref{eq:ds}, we arrive at the bounds
		$$
		{\rm depth}(W_{n,\Lambda_n}) \leq C(sn\log(n)+d) \leq C \left((1+\log(n))n\log(n)+1+\log(n)\right) 
		\leq C(1+n\log^2(n))\;,
		$$
		and
		$$
		{\rm width}(W_{n,\Lambda_n}) \leq C(d+\log(n)) \leq C(1+2\log(n)) \;.
		$$
		This implies the depth, width and number of parameters bound.
	\end{proof}

\subsection{PQC Emulation of tensorized Chebyshev-polynomials}
\label{sec:PQCTschgPC}
The core of the ensuing existence and approximation rate results 
is the following multivariate version of \cref{lem:qsp-1d-chebyshev},
which is along the lines of \cite[App.~B]{yu2023provable}.
\begin{lemma}\label{lem:ProdTschQC}
	Let $\bsnu \in \cF $ and with support size $d = |\bsnu|_0 <\infty$,
	and wlog. ${\rm supp}(\bsnu) = [1:d]$. 
	Consider the $d$-variate product Chebyshev polynomial
	\[
	T_{\bsnu}:  [-1,1]^d \to \IR : \bsy \mapsto \prod_{j=1}^d T_{\nu_j}(y_j) \;,
	\]
	with the product denoting pointwise multiplication, 
	at each $\bsy\in [-1,1]^d$. Then, 
	there exists a PQC $W_{\bsnu}$
	such that
	for $\bsy\in [-1,1]^d$
	\begin{equation}\label{eq:QCTnu}
		T_{\bsnu}( \bsy ) = \bra{0} W_{\bsnu}^\dagger(\bsy) Z^{(0)} W_{\bsnu}(\bsy) \ket{0}
		\quad \mbox{for all} \quad \bsy \in [-1,1]^{d}
		\;.
	\end{equation}
	Here, 
	$Z^{(0)}$ is the Pauli $Z$ observable on the first qubit. 
	The PQC is such that: 
	\begin{enumerate}
		\item[(i)] \( \mathrm{Width} (W_{\bsnu}) \) is equal to the number of variables: \( d = |\bsnu|_0 \),
		\item[(ii)] \( \mathrm{Depth} (W_{\bsnu}) \) is given by \( \max \{ 2\nu_1+1, \dots, 2\nu_d+1 \} = 2|\bsnu|_\infty + 1 \),
		\item[(iii)] $\mathrm{Size} (W_{\bsnu})$ is \( |\bsnu|_1 + d \).
	\end{enumerate}
\end{lemma}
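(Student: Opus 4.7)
The natural approach is to tensorize the univariate QSP circuits from \cref{lem:qsp-1d-chebyshev} and then apply a Hadamard test to recast the resulting matrix element in the observable form $\bra{0}W_{\bsnu}^\dagger Z^{(0)} W_{\bsnu}\ket{0}$ required by the statement. For each $j\in\{1,\dots,d\}$, I would invoke \cref{lem:qsp-1d-chebyshev} to obtain a single-qubit unparameterized PQC $U_{\nu_j}(y_j)$ of depth $2\nu_j+1$ with $\nu_j+1$ phase-angle parameters, satisfying $\bra{0}U_{\nu_j}(y_j)\ket{0} = T_{\nu_j}(y_j)$. Placing these on $d$ disjoint qubits and running them in parallel yields $V(\bsy) := \bigotimes_{j=1}^d U_{\nu_j}(y_j)$, and factorization of matrix elements across tensor products gives
$$ \bra{0}^{\otimes d} V(\bsy)\ket{0}^{\otimes d} = \prod_{j=1}^{d} \bra{0} U_{\nu_j}(y_j) \ket{0} = \prod_{j=1}^{d} T_{\nu_j}(y_j) = T_{\bsnu}(\bsy). $$

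To pass from this matrix element to the required Pauli-$Z$ expectation, I would adjoin one ancilla qubit and set $W_{\bsnu}(\bsy) := (H \otimes I^{\otimes d})\,\mathrm{ctrl}\text{-}V(\bsy)\,(H \otimes I^{\otimes d})$, where $\mathrm{ctrl}\text{-}V$ denotes $V(\bsy)$ controlled on the ancilla. A standard Hadamard-test computation, together with the reality of $T_{\bsnu}(\bsy) \in \IR$, then yields
$$ \bra{0} W_{\bsnu}^\dagger(\bsy)\, Z^{(0)}\, W_{\bsnu}(\bsy)\ket{0} = \mathrm{Re}\,\bra{0}^{\otimes d} V(\bsy)\ket{0}^{\otimes d} = T_{\bsnu}(\bsy), $$
with $Z^{(0)}$ the Pauli-$Z$ on the ancilla, which plays the role of the designated first qubit. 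Since each $U_{\nu_j}$ is composed only of single-qubit rotation and phase gates, each can be replaced by its singly-controlled version without introducing new phase parameters and without disturbing the parallel structure across the variable qubits.

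The resource count then reads off immediately. Parallel composition of the $U_{\nu_j}$ pins the depth to $\max_j (2\nu_j+1) = 2|\bsnu|_\infty + 1$, with the two Hadamards and the controls absorbed as constant overhead; the width on the variable register is $d = |\bsnu|_0$ (the Hadamard-test ancilla being bookkept separately in the role of the first qubit, consistent with the statement's $Z^{(0)}$ notation); and the total number of free phase parameters is $\sum_{j=1}^d (\nu_j + 1) = |\bsnu|_1 + d$, matching the stated size. The main obstacle I expect to navigate is precisely this bookkeeping of the Hadamard-test ancilla against the stated width $|\bsnu|_0$; verifying that controlled single-qubit rotations preserve the stated depth, and checking that the imaginary part of the Hadamard test vanishes owing to $T_{\bsnu}\in\IR$, are routine.
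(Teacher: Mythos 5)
Your core construction is the same as the paper's: invoke Lemma~\ref{lem:qsp-1d-chebyshev} for each active coordinate, form the tensor product $\bigotimes_{j=1}^d U_{\nu_j}(y_j)$, and factor the matrix element to obtain $\bra{0}^{\otimes d}\bigl(\bigotimes_{j=1}^d U_{\nu_j}(y_j)\bigr)\ket{0}^{\otimes d}=T_{\bsnu}(\bsy)$. That is precisely where the paper's proof stops: the counts in (i)--(iii) are read off from the bare tensor product, and the conversion of matrix elements into $Z^{(0)}$-expectations via the Hadamard test is deferred to the later LCU constructions (Theorems~\ref{thm:QCd<oo} and~\ref{thm:QcAppdinf}), where its cost is accounted once for the whole linear combination rather than once per $\bsnu$.

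The genuine problem lies in the extra step you add. Wrapping $V(\bsy)=\bigotimes_{j=1}^d U_{\nu_j}(y_j)$ in a Hadamard test requires $\mathrm{ctrl}\text{-}V$, i.e.\ controlling every gate of every $U_{\nu_j}$ on the single shared ancilla. Gates that all act on the ancilla wire cannot be scheduled in the same layer, so the parallelism across the $d$ variable registers is destroyed: the depth of $\mathrm{ctrl}\text{-}V$ is of order $\sum_{j=1}^d(2\nu_j+1)=2|\bsnu|_1+d$ rather than $\max_j(2\nu_j+1)$, and the width is $d+1$. Your assertion that the controls are ``absorbed as constant overhead'' is therefore not correct, and your construction does not satisfy (ii) (nor, strictly, (i)). The fix is architectural, not bookkeeping: either prove the lemma in the matrix-element form $T_{\bsnu}(\bsy)=\bra{0}^{\otimes d}W_{\bsnu}(\bsy)\ket{0}^{\otimes d}$, as the paper's proof effectively does, or accept the additive depth and the extra qubit if you insist on the literal $Z^{(0)}$-expectation form. (Your Hadamard-test identity itself, $\langle Z^{(0)}\rangle=\mathrm{Re}\,\bra{0}^{\otimes d}V(\bsy)\ket{0}^{\otimes d}=T_{\bsnu}(\bsy)$ by realness of $T_{\bsnu}$, is correct.)
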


\begin{proof}
	\cref{lem:qsp-1d-chebyshev} 
	implies that for each $r\in \IN_0$ exists a PQC $U_{r}$ 
	such that $\bra{0} U_{r}(y) \ket{0} = T_{r}(y)$ for all $y \in [-1,1]$. 
	The width of each $U_{r}(y)$ is $1$, 
	the depth is $2r+1$, 
	and 
	the number of pre-determined parameters is $r+1$. For $\bsnu \in \cF$, 
	consider the unitary
	\begin{equation*}
		U_{\bsnu}(\bsy) 
		= 
		\bigotimes_{j\geq 1} U_{\nu_j} \left( y_j \right).
	\end{equation*}
	Note that, 
	since $U_0 \equiv 1$, with ${\rm supp}(\bsnu) = [1:d]$,
	for all $\bsy \in \cU$ holds
	\begin{equation}\label{eq:TnuUnu}
		\bra{0}^{\otimes d}  U_{\bsnu}(\bsy) \ket{0}^{\otimes d} 
		= 
		\prod_{j=1}^d \bra{0} U_{\nu_j} \left( y_j \right) \ket{0} 
		= 
		\prod_{j=1}^d T_{\nu_j}(y_j) = T_{\bsnu }(\bsy).
	\end{equation}
	The width of $U_{\bsnu }(\bsy)$ is $|\bsnu|_0 \leq d$. Since $U_{\nu_j}$ has depth $2 \nu_j + 1$, 
	the PQC $U_{\bsnu }$ has depth  
	$$
	\max \{ 2\nu_1+1, \cdots, 2\nu_d + 1 \} = 2|\bsnu|_\infty+1.
	$$ 
	Since each $U_{\nu_j}$ has a total of $\nu_j + 1$  pre-determined parameters, 
	the PQC $U_{\bsnu }(\bsy)$ has a total of  
	$\nu_1 + \cdots \nu_d + d = |\bsnu|_1 + d$ 
	pre-determined parameters.
\end{proof}
\subsection{PQC expression of holomorphic maps: finite-dimensional case}
\label{sec:QCe<finty}
We establish PQC expression rates for finite-parametric, holomorphic maps
in the setting of Theorem~\cref{thm:ExpFinDim}, i.e. for parameter domain
$\IU$ of finite dimension $d$.
We write for a multi-qubit system which is initialized in
state $\ket{0}^{\otimes M}$ for some $M\in \IN$ simply $\ket{0}$.
\begin{theorem}\label{thm:QCd<oo}
	Assume $1\leq d < \infty$ and $\IU = [-1,1]^d$.
	Let $f\in \cH(\bsrho)$ for some $\bsrho \in (1,\infty)^d$. 
        Then, there are constants $C,\gamma>0$ (depending on $f$ and $d$) 
	such that, for every $n\in \IN$, 
	exists an LCU PQC $U_n$ such that
	%
	$$
	\| f - \bra{0} U^\dagger_n Z^{(0)} U_n \ket{0} \|_{L^\infty(\IU)} 
	\leq 
	C\exp(-\gamma n^{1/d}).
	$$
	Moreover, 
	the PQC $U_n$ is such that 
	%
	\begin{align*}
		\mathrm{width}(U_n) &= O\left(d + \log_2(n)\right), \\
		\mathrm{depth}(U_n) &= \mathcal O (n^{1 + 1/d} \log_2(n)).
	\end{align*}
\end{theorem}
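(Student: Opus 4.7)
The plan is to combine the exponential approximation rate of Theorem~\ref{thm:ExpFinDim} with the tensorized \Tsch-PQC encoding of Lemma~\ref{lem:ProdTschQC}, assembled by the LCU construction of Section~\ref{lcu-sec} in exactly the template used in the proof of Theorem~\ref{thm:TaypQC}. First I would invoke Theorem~\ref{thm:ExpFinDim} with the product index sets $\Lambda_n = \{0,1,\dots,k\}^d$, choosing $k=\lfloor n^{1/d}\rfloor-1$ so that $|\Lambda_n|=(k+1)^d \le n$ and $k=O(n^{1/d})$. This yields
\begin{equation*}
\| f - f^{\operatorname{Cheb}}_{\Lambda_n} \|_{L^\infty(\IU)}
\le \| f - f^{\operatorname{Cheb}}_{\Lambda_n} \|_{L^\infty_\varrho(\IU)}
\le C\exp(-\gamma n^{1/d}),
\end{equation*}
with $f^{\operatorname{Cheb}}_{\Lambda_n}(\bsy) = \sum_{\bsnu\in\Lambda_n} c_\bsnu T_\bsnu(\bsy)$.

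Next, for each $\bsnu\in\Lambda_n$, Lemma~\ref{lem:ProdTschQC} furnishes an unparameterized PQC $W_\bsnu$ of width $|\bsnu|_0\le d$, depth $2|\bsnu|_\infty+1\le 2k+1=O(n^{1/d})$, and size $|\bsnu|_1+d=O(kd)$, satisfying $T_\bsnu(\bsy)=\bra{0}W_\bsnu^\dagger(\bsy) Z^{(0)} W_\bsnu(\bsy)\ket{0}$. I would then enumerate $\Lambda_n = \{\bsnu(j)\}_{j=1}^{|\Lambda_n|}$ and build an LCU exactly as in the Taylor case: a state-preparation unitary $F$ on $\lceil\log_2 n\rceil$ ancilla qubits whose amplitudes encode $\sqrt{|c_{\bsnu(j)}|/\|\bsc\|_{\ell^1(\Lambda_n)}}$ (with the signs of $c_{\bsnu(j)}$ absorbed into a Pauli-$Z$ dressing of the select operator), the controlled select $U_{c,\Lambda_n}(\bsy) = \sum_{j=1}^{|\Lambda_n|} \ket{j}\bra{j}\otimes W_{\bsnu(j)}(\bsy)$, and $W_{\operatorname{LCU},\Lambda_n}=(F^\dagger\otimes I)U_{c,\Lambda_n}(F\otimes I)$. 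A Hadamard test on top then gives the PQC $U_n$ whose expectation $\bra{0} U_n^\dagger Z^{(0)} U_n \ket{0}$ reproduces $f^{\operatorname{Cheb}}_{\Lambda_n}(\bsy)$, so that the stated error bound follows at once from Theorem~\ref{thm:ExpFinDim}.

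For the resource accounting, the register layout consumes $d$ qubits of data, $\lceil\log_2 n\rceil$ LCU ancilla qubits, and one Hadamard-test qubit, giving $\mathrm{width}(U_n)=O(d+\log_2 n)$ as claimed. For depth, each $W_{\bsnu(j)}$ contains at most $O(kd)$ single-qubit rotations, so its $O(\log_2 n)$-qubit controlled version can be realized with depth $O(kd\log_2 n)=O(n^{1/d}\log_2 n)$ using the multi-controlled decomposition of \cite{daSilva2022LinearDepthMultiQubit} (treating $d$ as a fixed constant). Summing sequentially over the $|\Lambda_n|\le n$ indices yields
\begin{equation*}
\mathrm{depth}(U_{c,\Lambda_n}) = O\!\bigl( n\cdot n^{1/d}\log_2 n \bigr) = O\!\bigl( n^{1+1/d}\log_2 n \bigr),
\end{equation*}
which dominates the $O(\log_2 n)$ depth of $F$ and the constant overhead of the Hadamard test.

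The main obstacle is bookkeeping the LCU normalization so that the circuit expectation genuinely represents $f^{\operatorname{Cheb}}_{\Lambda_n}$ rather than a scaled surrogate. The key estimate is a uniform-in-$n$ bound on $\|\bsc\|_{\ell^1(\Lambda_n)}$, which follows from the Bernstein-type decay $|c_\bsnu|\lesssim \prod_{j=1}^d \rho_j^{-\nu_j}$ available for $f\in\cH(\bsrho)$ with $\bsrho\in(1,\infty)^d$: the geometric series converges to a constant depending only on $\bsrho$ and $d$, so the $\|\bsc\|_{\ell^1}$ factor can be absorbed into the constants $C,\gamma$ of the error bound (and, if desired, compensated by a single-qubit rescaling gate attached to the Hadamard-test ancilla, exactly as in the proof of Theorem~\ref{thm:TaypQC}). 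A secondary subtlety is ensuring the signs of the $c_\bsnu$ are propagated correctly through LCU; this is standard and contributes no additional asymptotic cost.
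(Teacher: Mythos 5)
Your proposal follows essentially the same route as the paper's proof: Theorem~\ref{thm:ExpFinDim} for the truncated \Tsch-expansion with $k=O(n^{1/d})$, Lemma~\ref{lem:ProdTschQC} for the per-term unitaries, the LCU-plus-Hadamard-test assembly, and the same width/depth accounting via the multi-controlled-gate decomposition of \cite{daSilva2022LinearDepthMultiQubit}, arriving at the identical bounds. Your explicit treatment of the $\ell^1$ normalization via the geometric decay of the \Tsch-coefficients is a point the paper's proof leaves implicit, but it does not change the argument's structure.
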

\begin{proof}
	We first construct the Chebyshev-polynomial approximation. The assumption $d<\infty$ and $f\in \cH(\bsrho)$ 
	allow to use Theorem~\cref{thm:ExpFinDim}. According to that theorem, for every integer $n\geq 2$ exists 
	a d.c. index set $\Lambda_n \subseteq \IN_0^{\IN}$ such that
	the $\Lambda_n$-truncated Chebyshev-expansion 
	\begin{equation}\label{eq:Fln}
		f^{\operatorname{Cheb}}_{\Lambda_n} = \sum_{\bsnu\in \Lambda_n} c_\bsnu T_\bsnu
	\end{equation}
	satisfies the error bound \cref{eq:FinDimErr}. We now construct the PQC. We start the construction of the PQC $U_n$ 
	by emulating $T_\bsnu$ in $f^{\operatorname{Cheb}}_{\Lambda_n}$
	with Lemma~\cref{lem:ProdTschQC},
	resulting in unitary PQCs $\{ U_\bsnu : \bsnu\in \Lambda_n \}$. 
	Next, 
	we build the PQC $U_n$ from the 
	PQCs $U_\bsnu$ with the LCU recapped in \cref{lcu-sec},
	as follows.
	Given that 
	in Theorem~\cref{thm:ExpFinDim} we have
	$\Lambda_n = \{ \bsnu\in \IN_0^d : |\bsnu|_\infty\leq k \}$
	(tensor products of degree $k$ univariate Chebyshev-projections)  with $n = \mathcal O (k^d)$,
	there are $T=n$ terms in \cref{eq:Fln} 
	with respect to some enumeration $j: [1:n]\to \Lambda_n$ of the multi-index set $\Lambda_n$. 
	Next, the LCU assumes at hand an implementation 
	of the controlled unitary
	\begin{equation}
		U_{c,n} := \sum_{\bsnu\in \Lambda_n}  U_\bsnu = \sum_{\bsnu\in \Lambda_n} U_\bsnu \otimes \ket{j}\bra{j} 
		\;.
	\end{equation}
	With the unitary $F$ as in \cref{F-LCU}, 
	with $c_\bsnu$ in place of $a_j$,
	the LCU provides the implementation of the unitary $U_{c,n}$ as follows
	\begin{equation}\label{eq:WLCU}
		W_{\operatorname{LCU},n} := (I \otimes F^{\dagger} ) U_{c,n} (I \otimes F).
	\end{equation}
	With the notation from \cref{lcu-sec},
	for input quantum state \(\ket{\psi}\) 
	and 
	for ancilla qubit \(\ket{0}\),
	\begin{equation}\label{eq:Ucn} 
		(I \otimes F^{\dagger} ) U_{c,n} (I \otimes F) \ket{\psi} \ket{0} 
		= 
		\frac{1}{\| \boldsymbol{c} \|_1} \left( \sum_{\bsnu\in \Lambda_n} c_\bsnu  U_{\bsnu} \right) \ket{\psi} \ket{0} + \ket{\perp}, 
	\end{equation}
	It implies with \cref{eq:TnuUnu} that for all $\bsy\in \IU$
	\begin{equation}\label{eq:fCheb}
		f^{\operatorname{Cheb}}_{\Lambda_n}(\bsy)
		= 
		\sum_{\bsnu\in \Lambda_n} c_\bsnu \bra{0}^{\otimes d}  U_{\bsnu}(\bsy) \ket{0}^{\otimes d}
		=
		\bra{0}^{\otimes d}  
		\left(
		\sum_{\bsnu\in \Lambda_n} c_\bsnu  U_{\bsnu}(\bsy) 
		\right) 
		\ket{0}^{\otimes d}
		\;,
	\end{equation}
	and that the error bound \cref{eq:FinDimErr} holds. It remains to verify the claimed bounds on width and depth of PQC $U_n$.
	The bound on the width of $U_n$ is obvious. We proceed step by step:
	
	\begin{enumerate}
		\item[(i)] First consider the unitary operator
		\begin{equation}\label{equation:state-preperation}
			F\ket{0} = \frac{1}{\sqrt{\| c_{\bsnu} \|_1}}
			\sum_{\bsnu\in \Lambda_n} \sqrt{c_{\bsnu}}\ket{j}.
		\end{equation}
		Here we have assumed that \(c_{\bsnu} > 0\). Otherwise, we can absorbed the negative sign into the corresponding unitary operator \(U_{\bsnu}(\boldsymbol{y})\) without affecting the construction.  Since there are $T \leq n$ terms in the sum 
		in \cref{equation:state-preperation},
		\(F\) can be realized by applying a quantum circuit consisting of \(\mathcal O ( n )\) gates, acting on \(\mathcal O (\log_2 n) \) ancilla qubits.
		
		\item[(ii)] Next consider the controlled unitary operator 
		\[
		U_c(\boldsymbol y) = \sum_{\bsnu \in \Lambda_n} U_{\bsnu}(\boldsymbol y) \otimes \ket{\bsnu}\bra{\bsnu}.
		\]
		This operator \(U_c(\boldsymbol x)\) acts on a composite quantum system that includes \(\mathcal O (d)\) computational qubits and \(\mathcal O (\log_2 n)\) ancilla qubits.
		Each \(U_{\bsnu}(\boldsymbol y)\) can be implemented using a \(\mathcal O (| \bsnu |_\infty )\) single-qubit gates. Since there are \(\mathcal O (n)\) such terms in \(U_c(\boldsymbol y)\), the full operator requires $\mathcal O (n)$ multi-qubit controlled gates controlled on \(\mathcal O (\log_2 n)\) ancilla qubits. 
        The circuit depth required to implement a $\mathcal O (\log_2 n)$  multi-qubit controlled gate is \(\mathcal O (\log_2 n)\). 
        This follows from the argument in~\cite[Sec.~II pp.~3]{daSilva2022LinearDepthMultiQubit}, where it is shown that a multi-qubit controlled gate with control on \( m \) qubits can be implemented with linear depth, i.e., \( \mathcal O (m) \).
		Consequently, the overall implementation of \(U_c(\boldsymbol x)\) 
		involves a PQC with depth scaling as \(\mathcal O (n \log_2(n) | \bsnu |_\infty))\) 
		and width  scaling as \(\mathcal O (d+\log_2(n))\).
		
		\item[(iii)] 
		We utilize the LCU algorithm to construct the operator 
		\begin{equation*}
		W_{\operatorname{LCU}}(\boldsymbol{y}) = W_{\operatorname{LCU}} = (I \otimes F^{\dagger}) U_c(\boldsymbol{y}) (I \otimes F)
		\end{equation*} 
        \item[(iv)] Finally, to compute the desired expectation value, we append a single ancilla qubit to the quantum system and apply the Hadamard test by measuring the first qubit.  This has $\mathcal O (1)$ complexity. This follows directly from the construction in \cref{figure:lcu-plus-hadamard}: we append a single ancilla qubit, apply a constant number of gates (specifically, two Hadamard gates), and perform a single measurement on the ancilla qubit.
		\end{enumerate}
		
		We now estimate the depth of $U_n$.  We start again by observing that Theorem~\cref{thm:ExpFinDim} 
		implies that the tensor-product Chebyshev-approxmations 
		$f^{\operatorname{Cheb}}_{\Lambda_n}$, $n=1,2,3,...$, 
		have coordinate polynomial degree $k=\mathcal O (n^{1/d})$. It holds
		$$
		{\rm depth}(U_n) 
		\leq \max_{\bsnu\in \Lambda_n} \left({\rm depth}(U_\bsnu)\right) 
		\leq \max_{\bsnu\in \Lambda_n} 2 |\bsnu|_1 + 1
		= 1 + 2\max_{\bsnu\in \Lambda_n} |\bsnu|_1
		\;.
		$$
		To bound the latter maximum, 
		we observe that the tensor-product index sets $\Lambda_n\subseteq \IN_0^d$ 
		have, being d.c., 
		$$
		\max_{\bsnu\in \Lambda_n} | \bsnu|_1 
		=
		\max_{\bsnu\in \partial \Lambda_n} | \bsnu |_1 
		= 
		|(k,k,...,k)|_1 = dk = \mathcal O (dn^{1/d})\;.
		$$
		We also have 
		$$
		\max_{\bsnu\in \Lambda_n} | \bsnu|_\infty = k = \mathcal O (n^{1/d}) \;.
		$$
		Here, the constant implied in $\mathcal O (.)$ is independent of $d$.
		This completes the proof.
	\end{proof}
	%
	%
	\subsection{PQC expression of holomorphic maps: infinite-dimensional case}
	\label{sec:QCe=infty}
	We now consider $d=\infty$ case.
	The proof strategy to establish PQC approximation rate bounds  are similar to those used in the proof of Thm.~\cref{thm:QCd<oo}: based on an $n$-term gPC approximation rate bound, 
	here Thm.~\cref{thm:AlgInfDim}, and on Lemma~\cref{lem:ProdTschQC}, 
	we re-approximate the $n$-term truncated Chebyshev-gPC expansion  obtained in Thm.~\cref{thm:AlgInfDim}, in order to deduce  a corresponding PQC with comparable approximation rate bounds. Somewhat modified arguments will be used to investigate the  architecture of the PQCs which arise here: only logarithmic with respect to $n$
	depth circuits will be required, due to \cref{eq:LnlogBd}. Due to parametric-holomorphic maps arising in a wide range of PDE applications with high-dimensional, parametric input, this result
	can be relevant in a wide range of applications. In particular, response surfaces \cref{eq:PDErspsrf} of 
	holomorphic parametric PDE 
	can be efficiently represented by corresponding PQCs.
	\begin{theorem}\label{thm:QcAppdinf}
		Assume given
		$f: [-1,1]^\IN\to \IR$ in $ \cH(\bsb,p,\epsilon)$ 
		with some $0<p\leq 1$ and such that
		$\sup_{\bsy\in [-1,1]^\IN}|f(\bsy)|\leq 1$.  Then, for every $n\in \IN$ there exists a PQC $U_n$ 
		obtained via LCUs of Chebyshev-polynomial circuits  \cref{eq:QCTnu}
		such that the error bounds
		\begin{align*}
			\| f - \bra{0} U^\dagger_n Z^{(0)} U_n \ket{0} \|_{L^2_\varrho(\IU)}
			&\leq C(f) n^{-1/p+1/2}, \\
			\| f - \bra{0} U^\dagger_n Z^{(0)} U_n \ket{0}  \|_{L^\infty(\IU)}
			&\leq C(f) n^{-1/p+1}.
		\end{align*}
		hold.  
		Here $Z^{(0)}$ denotes the first Pauli observable on the first qubit. Moreover, the PQC is such that
		\begin{align*}
			\mathrm{width}(U_n) &= \mathcal O (n), \\
			\mathrm{depth}(U_n) &= \mathcal O (n + \log n).
		\end{align*}
	\end{theorem}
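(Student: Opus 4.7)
The plan is to mirror the proof of Theorem~\ref{thm:QCd<oo}, substituting Theorem~\ref{thm:AlgInfDim} for Theorem~\ref{thm:ExpFinDim} to obtain the truncated \Tsch-gPC expansion in the infinite-dimensional setting. That theorem supplies, for each $n\in\IN$, a d.c.\ index set $\Lambda_n\subseteq\cF$ with $|\Lambda_n|=n$ and the truncation $f^{\operatorname{Cheb}}_{\Lambda_n}=\sum_{\bsnu\in\Lambda_n}c_\bsnu T_\bsnu$ realising the claimed $L^2_\varrho$- and $L^\infty$-error rates $n^{-1/p+1/2}$ and $n^{-1/p+1}$. The key structural ingredient is the bound \eqref{eq:LnlogBd}: $\max_{\bsnu\in\Lambda_n}|\bsnu|_1\le C(1+\log n)$, so every retained multi-index has tiny support and modest total degree, which is exactly what will keep the PQC depth and width small.

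First, for each $\bsnu\in\Lambda_n$ I would invoke Lemma~\ref{lem:ProdTschQC} to produce a PQC $U_\bsnu$ representing $T_\bsnu$ as in \eqref{eq:QCTnu}, with width $|\bsnu|_0$, depth $2|\bsnu|_\infty+1$, and $|\bsnu|_1+|\bsnu|_0$ pre-determined phase parameters, each of which is $O(\log n)$ by the bound above. Next, enumerate $\Lambda_n=\{\bsnu(j)\}_{j=1}^n$ and combine the $U_{\bsnu(j)}$ via the LCU construction of Sec.~\ref{lcu-sec}: build a state-preparation unitary $F$ with $F\ket{0}=\|\bsc\|_1^{-1/2}\sum_{j=1}^n\sqrt{|c_{\bsnu(j)}|}\,\ket{j}$ (absorbing any sign of $c_{\bsnu(j)}$ into $U_{\bsnu(j)}$), form the controlled unitary $U_{c,n}=\sum_{j=1}^n\ket{j}\bra{j}\otimes U_{\bsnu(j)}$, and set $W_{\operatorname{LCU},n}=(F^\dagger\otimes I)\,U_{c,n}\,(F\otimes I)$. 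Finally, prepend a Hadamard test on a fresh ancilla (cf.\ Fig.~\ref{figure:lcu-plus-hadamard}) to obtain a PQC $U_n$ such that $\bra{0}U_n^\dagger Z^{(0)} U_n\ket{0}=f^{\operatorname{Cheb}}_{\Lambda_n}(\bsy)$ for every $\bsy\in\IU$; combined with Theorem~\ref{thm:AlgInfDim} this yields the two claimed error bounds.

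For the width I would exploit the d.c.\ structure: any coordinate $j\in\bigcup_{\bsnu\in\Lambda_n}\supp(\bsnu)$ satisfies $\bse_j\in\Lambda_n$, so the number of active coordinates is at most $|\Lambda_n|-1< n$, giving $O(n)$ computational qubits, plus $\lceil\log_2 n\rceil$ ancillas for the LCU selector and $O(1)$ for the Hadamard test, hence width $O(n)$. For the depth, each $U_{\bsnu(j)}$ has depth $O(|\bsnu(j)|_\infty)=O(\log n)$, each $O(\log n)$-qubit controlled version of it can be implemented with depth $O(\log n)$ by the construction of~\cite{daSilva2022LinearDepthMultiQubit}, and I would schedule the $n$ controlled blocks so that those acting on disjoint subsets of computational qubits are executed concurrently; because each $|\bsnu(j)|_0=O(\log n)$ while the register has $\Theta(n)$ computational qubits, the bulk of the controlled blocks can be parallelised, which together with the $F$, $F^\dagger$ and Hadamard-test layers yields total depth $O(n+\log n)$.

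The main obstacle will be precisely the depth estimate: a naive sequential implementation of the $n$ multi-controlled-$U_{\bsnu(j)}$ gates would cost $O(n\log n)$ depth, and tightening this to $O(n+\log n)$ requires the parallel scheduling argument sketched above together with the logarithmic-depth decomposition of $O(\log n)$-qubit controlled gates. By contrast, the inheritance of the approximation error from $f^{\operatorname{Cheb}}_{\Lambda_n}$ and the width bound from the d.c.\ structure are essentially routine once Theorem~\ref{thm:AlgInfDim} and Lemma~\ref{lem:ProdTschQC} are available.
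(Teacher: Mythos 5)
Your overall route coincides with the paper's: Theorem~\ref{thm:AlgInfDim} supplies the $n$-term truncated \Tsch-gPC expansion together with the $L^2_\varrho$ and $L^\infty$ rates, Lemma~\ref{lem:ProdTschQC} supplies the per-term circuits $U_\bsnu$, and these are assembled by the LCU construction of Sec.~\ref{lcu-sec} plus a Hadamard test, so that the approximation error is inherited verbatim from \eqref{eq:InfDimErr}. Your width argument --- every active coordinate $j$ forces $\bse_j\in\Lambda_n$ by downward closedness, hence at most $n-1$ computational qubits plus $O(\log_2 n)$ selector ancillas --- is in fact more careful than the paper's, which only records $\max_{\bsnu\in\Lambda_n}|\bsnu|_0\le Cn$.

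The step I would push back on is the depth estimate, which you correctly single out as the crux. Your proposed fix is to run concurrently those controlled blocks $\ket{j}\bra{j}\otimes U_{\bsnu(j)}$ that act on disjoint sets of computational qubits. But every one of these blocks is controlled on the \emph{same} $O(\log_2 n)$-qubit selector register prepared by $F$, so no two of them act on disjoint qubit sets and the proposed parallel schedule is not available. Consequently your own sequential count ($n$ blocks, each an $O(\log n)$-qubit-controlled circuit of depth $O(\log n)$, i.e.\ $O(n\log n)$ or $O(n\log^2 n)$ depending on how the controls are decomposed) is what the construction actually delivers, and the claimed $O(n+\log n)$ is not reached by this argument. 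For what it is worth, the paper's own proof (explicitly labelled a sketch) does not close this gap either: it bounds $\mathrm{depth}(U_{\Lambda_n})\le\max_{\bsnu\in\Lambda_n}\bigl(|\bsnu|_0+2|\bsnu|_1\bigr)\le n+2C(1+\log n)$, which accounts for a single $U_\bsnu$ rather than for the select operator $U_{c,n}$, and is inconsistent with the $O(n\log_2(n)\,|\bsnu|_\infty)$ accounting used for $U_c$ in the finite-dimensional Theorem~\ref{thm:QCd<oo}. So you have located the genuine weak point of the statement, but the parallelisation you sketch would fail; an honest version of your proof should either weaken the depth bound to $O(n\log^2 n)$ or supply a genuinely different implementation of the select operator.
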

	\begin{proof}
		(Sketch of argument in the case $d=\infty$). The assumptions imply that 
		Thm.~\cref{thm:AlgInfDim} and the error bounds \cref{eq:InfDimErr}
		hold. 
                We construct the approximating PQCs by 
		emulating the truncation $n$-term Chebyshev-gPC expansions in \cref{eq:InfDimErr}. 
                For given $f: [-1,1]^\IN\to \IR$ in $ \cH(\bsb,p,\epsilon)$,
		Thm.~\cref{thm:AlgInfDim} 
		provides a sequence $\{ f_{\Lambda_n} \}_{n\in \IN}$ 
		of $n$-term truncated Chebyshev-gPC expansions 
		\begin{equation}\label{eq:MltvPol}
			f^{\operatorname{Cheb}}_{\Lambda_n}(\bsy) 
			= \sum_{\bsnu\in \Lambda_n} f_\bsnu T_\bsnu(\bsy), 
			\quad \bsy \in \cU = [-1,1]^\IN 
		\end{equation}
		with the approximation error bounds \cref{eq:InfDimErr}.
		Here, 
		the sets $\Lambda_n \subseteq \cF$ are 
		nested d.c. index sets of finite cardinality $|\Lambda_n| \leq n$,
		and $f_\bsnu$ are the Chebyshev-coefficients of $f$. 
		Eqn. \cref{eq:LnlogBd} provides furthermore 
		that there exists a constant $C>0$ such that
		$$
		\forall n\in \IN \; \forall \bsnu \in \Lambda_n : 
		\quad 
		\max_{\bsnu\in \Lambda_n} |\bsnu|_1 \leq C(1+\log(n)) \;.
		$$
		Using 
		$|\bsnu|_0 \leq |\bsnu|_1$ 
		and
		$|\bsnu|_\infty = \max_{j:\nu_j\ne 0} \nu_j \leq |\bsnu|_1$,
		it follows that there exists a constant $C>0$ such that,
		for $q\in \{ 0,1,\infty\}$ it holds
		\begin{equation}\label{eq:NuBd}
			\forall n\in \IN \; \forall \bsnu \in \Lambda_n : 
			\quad 
			\max_{\bsnu\in \Lambda_n} |\bsnu|_q \leq C(1+\log(n)) \;.
		\end{equation}
		We construct the PQCs corresponding to the $\{f_{\Lambda_n}\}_{n\geq 1}$.
		To this end, we proceed as in the proof of Thm.~\cref{thm:QCd<oo}.
		The PQCs representing the $T_\bsnu$ were built in Lemma~\cref{lem:ProdTschQC}.
		The PQC representation of the $f_{\Lambda_n}$ in \cref{eq:InfDimErr} in 
		Thm.~\cref{thm:AlgInfDim} will be via LCU. We observe $\| T_\bsnu \|_{L^\infty(\cU)} = 1 $.
		Assuming $\bsf_n := \{ f_\bsnu \}_{\bsnu\in \Lambda_n}$
		is scaled to ensure $\| \bsf_{n} \|_1 \leq 1$, 
		we note that the approximation \cref{eq:MltvPol} has the form \cref{eq:Ucn}.
		It implies with \cref{eq:TnuUnu}, \cref{eq:fCheb} the PQC emulation 
		of the Chebyshev-gPC expansion \cref{eq:MltvPol}. The estimation of the depth and width of the PQC $U_n := U_{\Lambda_n}$ 
		resulting from LCU construction based on \cref{eq:MltvPol} 
		has, with \cref{eq:NuBd},
		$$
		{\rm width}(U_{\Lambda_n}) 
		\leq 
		\max_{\bsnu\in \Lambda_n} |\bsnu|_0
		\leq 
		Cn
		\;,
		$$
		and
		$$
		{\rm depth}(U_{\Lambda_n})
		\leq 
		\max_{\bsnu\in \Lambda_n} |\bsnu|_0 + 2|\bsnu|_1
		\leq 
		|\Lambda_n| + 2C \log(n) 
		\leq 
		n+2C(1+\log(n))
		\;.
		$$
    This completes the proof.
	\end{proof}
	\section{Conclusions and Extensions}
	\label{sec:Concl}
	For certain classes of multi- and infinite-variate, real-valued maps which admit holomorphic 
	extensions with respect to their arguments into the complex domain, we developed 
	PQC expression rate bounds which are exponential for finitely many parameters, and 
	algebraic, but free from the CoD for functions of countably many variables. Our constructive proofs leverage recent results on $n$-term approximation rates 
	for generalized polynomial chaos (gPC) expansions of such maps, 
	combined with a re-expression of the tensor-product polynomial terms 
	in the gPC expansion by PQCs and LCU.
	Particular emphasis was placed on Taylor- and Chebyshev-gPC expansions, 
	which require bounded parameter ranges, but permit parsimonious 
	product constructions of PQCs.
	
	We considered here only PQC emulations of real-valued, parametric holomorphic maps.
	conceivably, weaker regularity than holomorphy can be sufficient for such results
	(see, e.g. \cite{HSS24}). Also of interest is the PQC emulation of parametric maps with codomains being 
	subsets of separable Hilbert or Banach spaces. Here, PQCs for space discretizations
	have to be developed; this ``quantum circuit expression of PDE solutions'' 
	is, currently, an active area of research, and progress here could be combined 
	with the presently developed results.
	
	All results in the present paper are based on PQC emulations of 
	\emph{finite} tensor products of univariate polynomials.
	In the univariate case, 
	recent significant progress in so-called
	``Infinite QSP'' \cite{alexis2024infinitequantumsignalprocessing,Dong2024infinitequantum,AMT24}
	established a close connection between summability of Chebyshev-coefficient
	sequences and the sequence of (reduced) phase angles in PQCs. 
	Further research could address sparsity of (arrays of) phase angles 
	in the presently considered approximations as a function of the Chebyshev-gPC coefficient
	sequence summability, 
	which determines sparsity and $n$-term approximation rates of finitely truncated gPC expansions.
	
	\section*{Acknowledgement}
	Junaid Aftab acknowledges the support by the National Science Foundation under the grant DMS-2231533.
	Haizhao Yang was partially supported by the US National Science Foundation under awards DMS-2244988, DMS-2206333, the Office of Naval Research Award N00014-23-1-2007, and the DARPA D24AP00325.
	Christoph Schwab acknowledges a visit to the department of mathematics
	at UMDCP in December 2024, during which key arguments in the present paper were obtained.

\printbibliography

\end{document}